\documentclass[12pt]{article}
\usepackage{amsmath,amsfonts,amssymb,amscd,verbatim,comment, amsthm}
\usepackage{fullpage}
\usepackage{color}
\usepackage{array}
\usepackage{enumerate}
\usepackage{mathtools}
\allowdisplaybreaks
\numberwithin{equation}{section}
\theoremstyle{plain}
 
\newtheorem{theorem}[equation]{Theorem} 
\newtheorem{conjecture}[equation]{Conjecture}
\newtheorem{corollary}[equation]{Corollary} 
\newtheorem{lemma}[equation]{Lemma}

\theoremstyle{definition}
\newtheorem{definition}[equation]{Definition}

\title{On the spread of outerplanar graphs}
\author{Daniel Gotshall\thanks{Department of Mathematics \& Statistics, Villanova University, email addresses \texttt{\{dgotshall, mobrie71, michael.tait\}@villanova.edu}. The third author is partially supported by National Science Foundation grant DMS-2011553.} \and Megan O'Brien\footnotemark[1] \and Michael Tait\footnotemark[1]}

\begin{document}

\maketitle
\begin{abstract}
    The spread of a graph is the difference between the largest and most negative eigenvalue of its adjacency matrix. We show that for sufficiently large $n$, the $n$-vertex outerplanar graph with maximum spread is a vertex joined to a linear forest with $\Omega(n)$ edges. We conjecture that the extremal graph is a vertex joined to a path on $n-1$ vertices.
\end{abstract}

\section{Introduction}
The {\em spread} of a square matrix $M$ is defined to be 
\[
S(M): = \max_{i,j}|\lambda_i - \lambda_j|,
\]
where the maximum is taken over all pairs of eigenvalues of $M$. That is, $S(M)$ is the diameter of the spectrum of $M$. The spread of general matrices has been studied in several papers (e.g. \cite{B, Deutsch, JKW, Mirsky, NylenTam, Thompson, WZL}). In this paper, given a graph $G$ we will study the spread of the adjacency matrix of $G$, and we will call this quantity the {\em spread of $G$} and denote it by $S(G)$. Since the adjacency matrix of an $n$-vertex graph is real and symmetric, it has a full set of real eigenvalues which we may order as $\lambda_1 \geq \cdots \geq \lambda_n$. In this case, the spread of $G$ is given simply by $\lambda_1 - \lambda_n$.

The study of the spread of graphs was introduced in a systematic way by Gregory, Hershkowitz, and Kirkland in \cite{GHK}. Since then, the spread of graphs has been studied extensively. A problem in this area with an extremal flavor is to maximize or minimize the spread over a fixed family of graphs. This problem has been considered for trees \cite{AP}, graphs with few cycles \cite{FWG, PBA, WS}, the family of all $n$-vertex graphs \cite{variable, BRTU, alex, stanic, stevanovic, john}, bipartite graphs \cite{BRTU}, graphs with a given matching number \cite{LZZ} or girth \cite{WZS} or size \cite{LM}.

We also note that spreads of other matrices associated with a graph have been considered extensively (e.g. \cite{ADLR, FF, FWL, GZ, LMG, LL, OLNK,  YRY, YL,YZLWS}), but in this paper we will focus on the adjacency matrix. This paper examines the question of maximizing the spread of an $n$-vertex outerplanar graph. A graph is {\em outerplanar} if it can be drawn in the plane with no crossings and such that all vertices are incident with the unbounded face. Similarly to Wagner's theorem characterizing planar graphs, a graph is outerplanar if and only if it does not contain either $K_{2,3}$ or $K_4$ as a minor. Maximizing the spread of this family of graphs is motivated by the extensive history on maximizing eigenvalues of planar or outerplanar graphs, for example \cite{BR, CV, CR, DM, EZ, HS, LN, SW, Rowlinson, yuan1, yuan2}.

Our main theorem comes close to determining the outerplanar graph of maximum spread. Let $P_k$ denote the path on $k$ vertices and $G\vee H$ the join of $G$ and $H$. A linear forest is a disjoint union of paths.

\begin{theorem}\label{thm: main}
For $n$ sufficiently large, any graph which maximizes spread over the family of outerplanar graphs on $n$ vertices is of the form $K_1 \vee F$ where $F$ is a linear forest with $\Omega(n)$ edges. 
\end{theorem}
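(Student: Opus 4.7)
\noindent\emph{Proof proposal.} The plan proceeds in three stages: (i) pin down the asymptotic order of the extremal spread, (ii) show any extremal graph contains a universal vertex, and (iii) invoke outerplanarity to conclude.

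For stage (i), I would compute the spread of $K_1 \vee P_{n-1}$. Using the approximate eigenvector $(a, 1, 1, \ldots, 1)$ on the fan, the eigenvalue equations at the apex and at interior path vertices combine to give the quadratic $\lambda^2 - 2\lambda - (n-1) = 0$, yielding $\lambda_1(K_1 \vee P_{n-1}) = 1 + \sqrt{n} + o(1)$, and by the analogous argument with a sign flip, $\lambda_n(K_1 \vee P_{n-1}) = 1 - \sqrt{n} + o(1)$. Hence $S(K_1 \vee P_{n-1}) = 2\sqrt{n} + o(1)$, providing the lower bound on $S(G)$ for any extremal outerplanar $G$. Matching this above, since $|E(G)| \leq 2n-3$ for outerplanar $G$ we have $\lambda_1^2 + \lambda_n^2 \leq \sum_i \lambda_i^2 = 2|E(G)| \leq 4n - 6$, and therefore $S(G) \leq \sqrt{2(\lambda_1^2 + \lambda_n^2)} \leq 2\sqrt{2n-3}$ by Cauchy--Schwarz. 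In particular, both $\lambda_1(G)$ and $|\lambda_n(G)|$ are of order $\sqrt{n}$ in any extremal $G$, and combining with the known bound $\lambda_1(G) \leq 1 + \sqrt{n} + o(1)$ from maximum spectral radius results for outerplanar graphs, one gets $|\lambda_n(G)| \geq \sqrt{n} - 1 - o(1)$.

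For stage (ii), let $\mathbf{x}$ and $\mathbf{z}$ be unit Perron and least eigenvectors of $G$. The identity $\lambda_1 x_u = \sum_{v \sim u} x_v$ at the maximizer $u$ of $x_v$ gives $d_u \geq \lambda_1 = \Omega(\sqrt{n})$, and similarly $\mathbf{z}$ yields a vertex of degree $\Omega(\sqrt{n})$. A sharper argument that exploits the tightness of both $\lambda_1$ and $|\lambda_n|$ against their respective asymptotic maxima should concentrate the top eigenvector mass at a single vertex $u$ and force $d_u = \Omega(n)$. To promote $d_u$ to $n-1$, argue by contradiction: for any non-neighbor $v$ of $u$, perform a local modification $G \to G'$ (either adding $uv$ outright, or swapping an edge incident to $v$ for $uv$) to obtain an outerplanar graph with strictly larger spread. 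The delicate part is verifying that outerplanarity is preserved; since the characterization is via the forbidden minors $K_4$ and $K_{2,3}$, this proceeds by case analysis on how $v$ sits within the partial structure of $G$ already established.

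For stage (iii), once $u$ is universal, write $G = K_1 \vee F$ with $F = G - u$. The graph $K_1 \vee H$ is outerplanar precisely when $H$ is a linear forest: any cycle of $H$ together with the apex produces a $K_4$-minor by contracting the cycle to a triangle, and any vertex of degree $3$ in $H$ together with its three neighbors and the apex forms a $K_{2,3}$-subgraph. Thus $F$ is a linear forest. To show $|E(F)| = \Omega(n)$, argue by contradiction: if $|E(F)| = o(n)$, then $F$ has $n-1-o(n)$ isolated vertices, so $G$ is a small perturbation of $K_{1,n-1}$, which has spread $2\sqrt{n-1}$. A quantitative perturbation estimate then yields $S(G) \leq 2\sqrt{n-1} + o(1/\sqrt{n})$, strictly less than $S(K_1 \vee P_{n-1}) = 2\sqrt{n} + O(1/\sqrt{n})$ for large $n$, contradicting extremality.

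The chief obstacle is stage (ii): elevating a vertex of degree $\Omega(n)$ to a universal vertex via local modifications. Since outerplanarity is a global (excluded-minor) condition, a seemingly innocuous edge addition can introduce a $K_4$- or $K_{2,3}$-minor in ways that depend on the global structure of $G$, and the modification must be chosen carefully with reference to $\mathbf{x}$ and $\mathbf{z}$ to guarantee both structural validity and a net increase in spread. A secondary difficulty lies in stage (iii): because $S(K_1 \vee P_{n-1})$ and $S(K_{1, n-1})$ agree at leading order $2\sqrt{n}$, the argument distinguishing them must be sensitive to the lower-order $\Theta(1/\sqrt{n})$ gap.
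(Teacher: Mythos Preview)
Your three-stage plan matches the paper's architecture, and stages (i) and the first half of (iii) are essentially what the paper does. Two points, however, are where the real work lies, and your sketch does not yet contain the ideas that make them go through.

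\textbf{Stage (ii).} You worry that a local modification may destroy outerplanarity, and you anticipate a case analysis on forbidden minors. The paper sidesteps this completely: once it is known that the Perron-maximizing vertex $w$ has $d_w \geq n - O(\sqrt{n})$, the modification used for a non-neighbor $t$ is to delete \emph{all} edges at $t$ and attach $t$ as a pendant to $w$. A pendant can never create a $K_4$- or $K_{2,3}$-minor, so outerplanarity is automatic. What you are missing instead is the step that makes this modification increase the \emph{spread} (not just $\lambda_1$): one must first show that the vertex $w'$ maximizing $|\mathbf{z}_u|$ coincides with $w$. This follows because both have degree $n - O(\sqrt{n})$ and two such vertices would share $\Omega(n)$ neighbors, forcing a $K_{2,3}$. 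With $w=w'$ in hand and with the estimate $\sum_{v\sim t}|\mathbf{z}_v| = O(n^{-1/2})$ for $t\notin N(w)$, one can check directly that the pendant modification increases the Rayleigh quotient for $\mathbf{x}$ and decreases the one for (a sign-adjusted) $\mathbf{z}$ simultaneously.

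\textbf{Stage (iii).} Your claimed perturbation estimate $S(G)\le 2\sqrt{n-1}+o(n^{-1/2})$ when $m=o(n)$ is correct, but it is not a routine perturbation bound: naive estimates give only $\lambda_1 - \sqrt{n-1} = \Theta(m/n)$ and $\lambda_n + \sqrt{n-1} = \Theta(m/n)$, which separately are $o(1)$, not $o(n^{-1/2})$. The crucial observation is a \emph{cancellation}: because all non-apex entries of $\mathbf{z}$ have the same sign (approximately $-1/\sqrt{n-1}$), the $m$ forest edges shift $\lambda_n$ \emph{upward} by $m/(n-1)$ to first order, exactly matching the upward shift in $\lambda_1$. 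The paper makes this precise by proving
\[
\lambda_1 = \sqrt{n-1} + \tfrac{m}{n-1} + \Theta\!\left(\tfrac{m}{n^{3/2}}\right),\qquad
\lambda_n = -\sqrt{n-1} + \tfrac{m}{n-1} + \Theta\!\left(\tfrac{m}{n^{3/2}}\right),
\]
so that $S(G)=2\sqrt{n-1}+\Theta(m/n^{3/2})$, and then compares with the explicit lower bound $S(K_1\vee P_{n-1})\ge 2\sqrt{n}-1/n$. Without isolating this cancellation, the $\Omega(n)$ conclusion does not follow.
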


We leave it as an open problem to determine whether or not $F$ should be a path on $n-1$ vertices, and we conjecture that this is the case.

\begin{conjecture}
For $n$ sufficiently large, the unique $n$-vertex outerplanar graph of maximum spread is $K_1 \vee P_{n-1}$.
\end{conjecture}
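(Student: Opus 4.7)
The plan for Theorem \ref{thm: main} is to combine the outerplanar edge-count bound with a spectral extremal argument that forces the extremum to possess a universal vertex. Since outerplanar graphs have at most $2n-3$ edges, $\sum_i\lambda_i(G)^2 = 2|E(G)|\le 4n-6$ and hence $|\lambda_1|,|\lambda_n|\le 2\sqrt n$. A lower bound $S(G)\ge 2\sqrt{n-1}$ is immediate from the star $K_{1,n-1}$, and a sharper bound of the form $2\sqrt{n-1}+c$ (for an absolute $c>0$) can be extracted from $K_1\vee P_{n-1}$ by Rayleigh-quotient estimates using a test vector that perturbs the star's Perron and bottom eigenvectors along the path. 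Hence for any extremum $G$, both $\lambda_1(G)$ and $-\lambda_n(G)$ are of order $\sqrt n$.

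Let $x,y$ denote the top and bottom eigenvectors of $G$, normalised. For the vertex $v$ with $x_v$ maximal, the eigenvalue equation $\lambda_1 x_v = \sum_{u\sim v} x_u \le d_v x_v$ gives $d_v\ge\lambda_1 = \Omega(\sqrt n)$, and similarly for $y$. The heart of the proof is to show that $G$ actually has a \emph{universal} vertex, not merely one of degree $\Omega(\sqrt n)$. For this I would iterate local switching: if $u\notin N(v)$, consider $G'=G-vv'+vu$ for a neighbour $v'$ of $v$ with small $|x_{v'}|+|y_{v'}|$, chosen so that $G'$ remains outerplanar (feasible when $v$ already has very high degree, since the outer face leaves room for the swap). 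First-order Rayleigh estimates using $x,y$ as test vectors for $G'$ should yield $\lambda_1(G')>\lambda_1(G)$ and $\lambda_n(G')<\lambda_n(G)$, contradicting extremality unless $v$ is already universal. Thus $G=K_1\vee H$, and a minor-check --- a cycle in $H$ together with $v$ contains a $K_4$ minor, and a vertex of degree $\ge 3$ in $H$ together with $v$ contains a $K_{2,3}$ minor --- forces $H$ to be a linear forest $F$.

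Finally, to conclude $|E(F)|=\Omega(n)$: a Rayleigh-quotient bound gives $\lambda_1(K_1\vee F)\ge \sqrt{n-1}+|E(F)|/(n-1)$, with a similar bound on $-\lambda_n$ using sign-alternating test vectors on the paths of $F$; if $|E(F)|=o(n)$ this yields $S(G)=2\sqrt{n-1}+o(1)$, contradicting the sharper lower bound $2\sqrt{n-1}+c$ established in the first step. I expect the main obstacle to be the universality step: a naive switch may shift both $\lambda_1$ and $\lambda_n$ in the same direction, giving little or no net spread gain. Producing swaps that simultaneously improve both extremal eigenvalues while preserving outerplanarity --- together with the requisite structural lemmas about outerplanar graphs near a high-degree vertex --- is the technical core of the proof.
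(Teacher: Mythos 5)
There is a genuine gap, and it begins before the technical details: the statement you were asked to prove is the paper's \emph{Conjecture} --- that the unique spread-extremal outerplanar graph is $K_1\vee P_{n-1}$ --- which the authors explicitly leave open. Your proposal is addressed to Theorem \ref{thm: main} and, even if every step were repaired, would only show that the extremal graph is $K_1\vee F$ for a linear forest $F$ with $\Omega(n)$ edges. Nothing in your argument forces $F$ to be a single spanning path, nor addresses uniqueness. That missing step is the whole difficulty: joining two components of $F$ (or absorbing an isolated vertex into a path) changes $\lambda_1$ and $\lambda_n$ by amounts whose first-order contributions to the spread cancel, so one cannot decide between, say, $P_{n-1}$ and $P_{\lceil (n-1)/2\rceil}\cup P_{\lfloor (n-1)/2\rfloor}$ by the Rayleigh-quotient estimates you invoke. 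No local switching argument of the kind you describe resolves this.

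Even as a sketch of Theorem \ref{thm: main}, two quantitative claims are wrong. First, $S(K_1\vee P_{n-1})$ is \emph{not} $2\sqrt{n-1}+c$ for an absolute constant $c>0$: by Lemmas \ref{refined lambda_n bound} and \ref{refined lambda_1 bound}, $\lambda_1=\sqrt{n-1}+\tfrac{m}{n-1}+\Theta(m/n^{3/2})$ while $\lambda_n=-\sqrt{n-1}+\tfrac{m}{n-1}+\Theta(m/n^{3/2})$, so the $\tfrac{m}{n-1}$ terms cancel in $\lambda_1-\lambda_n$ and the spread exceeds $2\sqrt{n-1}$ only by $\Theta(n^{-1/2})$. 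In particular your claim that a sign-alternating test vector gives $-\lambda_n\ge \sqrt{n-1}+|E(F)|/(n-1)$ is false --- the path edges \emph{decrease} $|\lambda_n|$ at first order --- and the concluding contradiction must instead be run at the next order $m/n^{3/2}$ against the gap $2\sqrt{n}-2\sqrt{n-1}=\Theta(n^{-1/2})$, which is exactly how the paper's proof of Theorem \ref{thm: main} works. Second, your universality step via $G'=G-vv'+vu$ suffers from precisely the defect you flag at the end: a single-edge swap need not improve both extremal eigenvalues. The paper's device (Definition \ref{G star} and Lemma \ref{B empty}) is different: it deletes \emph{all} edges at a non-neighbor $t$ of $w$, adds only the edge $tw$ (which trivially preserves outerplanarity), and --- this is the key idea your proposal is missing --- evaluates the bottom Rayleigh quotient at a modified vector $\mathbf{z}^*$ in which the sign of $\mathbf{z}_t$ is flipped to $-|\mathbf{z}_t|$, so that the new edge $tw$ contributes negatively to $\lambda_n$ and positively to $\lambda_1$ simultaneously.
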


\section{Preliminaries}
Let $G$ be an outerplanar graph of maximum spread and let $A$ be its adjacency. We will frequently assume that $n$ is sufficiently large. We will use the characterization that a graph is outerplanar if and only it does not contain $K_{2,3}$ or $K_4$ as a minor. In particular, $G$ does not contain $K_{2,3}$ as a subgraph.  Given a vertex $v\in V(G)$, the neighborhood of $v$ will be denoted by $N(v)$ and its degree by $d_v$. If $f,g:\mathbb{N} \to \mathbb{R}$ we will use $f = \mathcal{O}(g)$ to mean that there exists a constant $c$ such that $f(n) \leq cg(n)$ for $n$ sufficiently large. $f = \Omega(g)$ means that $g = \mathcal{O}(f)$ and $f = \Theta(g)$ means that $f = \mathcal{O}(g)$ and $g = \mathcal{O}(f)$. We will occasionally have sequences of inequalities where we will abuse notation and mix inequality symbols with $\mathcal{O}(\cdot)$ and $\Theta(\cdot)$. \\

Let the eigenvalues of $A$ be represented as $\lambda_1\geq \lambda_2\geq \cdots \geq \lambda_n$. For any disconnected graph, adding an edge between the connected components will not decrease $\lambda_1$ and will also not decrease $-\lambda_n$. Therefore, without loss of generality, we may assume that $G$ is connected. By the Perron-Frobenius theorem we may assume that the eigenvector $\mathbf{x}$ corresponding to $\lambda_1$ has $\mathbf{x}_u > 0$ for all $u$.

Furthermore, we will normalize $\mathbf{x}$ so that it has maximum entry equal to $1$, and let $\mathbf{x}_w=1$ where $w$ is a vertex attaining the maximum entry in $\mathbf{x}$. Note there may be more than one such vertex, in which case we can arbitrarily choose and fix $w$ among all such vertices. The other eigenvector of interest to us corresponds to $\lambda_n$, call it $\mathbf{z}$.  We will also normalize $\mathbf{z}$ so that its largest entry in absolute value has absolute value $1$ and let $w'$ correspond to a vertex with maxumum absolute value in $\mathbf{z}$ (so $\mathbf{x}_{w'}$ equals $1$ or $-1$).

We will implement the following known equalities for the largest and smallest eigenvalues:
\begin{align}
    \lambda_1=\max\limits_{\mathbf{x}'\neq 0}&\frac{(\mathbf{x}')^tA\mathbf{x}'}{(\mathbf{x}')^t\mathbf{x}'}=\frac{\mathbf{x}^tA\mathbf{x}}{\mathbf{x}^t\mathbf{x}}\label{Rayleigh max}\\
    \lambda_n=\min\limits_{\mathbf{z}'\neq 0}&\frac{(\mathbf{z}')^tA\mathbf{z}'}{(\mathbf{z}')^t\mathbf{z}'}=\frac{\mathbf{z}^tA\mathbf{z}}{\mathbf{z}^t\mathbf{z}}\label{Rayleigh min}
\end{align}\\

An important result from equations \ref{Rayleigh max} and \ref{Rayleigh min} and the Perron-Frobenius Theorem is that for any strict subgraph $H$ of $G$, we have $\lambda_1(A(G))>\lambda_1(A(H))$. 
Finally, we will use the following theorem from \cite{TT}

\begin{theorem}\label{taittobin}
For $n$ large enough, $K_1 \vee P_{n-1}$ has maximum spectral radius over all $n$-vertex outerplanar graphs.
\end{theorem}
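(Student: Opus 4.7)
The plan is to let $G$ be an $n$-vertex outerplanar graph of maximum spectral radius $\lambda = \lambda_1(G)$, with Perron eigenvector $\mathbf{x}$ normalized so that $\mathbf{x}_w = \max_u \mathbf{x}_u = 1$. As a priori bounds, since $K_1 \vee P_{n-1}$ is a candidate (and contains $K_{1,n-1}$ as a subgraph), we get $\lambda \geq \sqrt{n-1}$, while outerplanarity forces $e(G) \leq 2n-3$ and hence $\lambda = \mathcal{O}(\sqrt{n})$ by the trace bound; in particular $\lambda = \Theta(\sqrt{n})$.

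Next, I would extract the structural consequence that $w$ is nearly universal. Writing $\lambda^2 \mathbf{x}_w = \sum_{u \sim w}\sum_{v \sim u}\mathbf{x}_v$, I split the double sum according to whether the inner vertex $v$ equals $w$, lies in $N(w)\setminus\{w\}$, or lies outside $N[w]$. The first case contributes $d_w$; the second contributes at most $2e(G[N(w)]) \leq 4d_w - 6$ since $G[N(w)]$ is itself outerplanar; and the third contributes at most $2\sum_{v \notin N[w]}\mathbf{x}_v$ because $K_{2,3}$-freeness forces each such $v$ to share at most two common neighbors with $w$. Combined with $\mathbf{x}_v = \mathcal{O}(1/\lambda)$ for $v \notin N[w]$ (from applying the same $K_{2,3}$ bound at $v$ and using $\lambda \mathbf{x}_v = \sum_{u \sim v}\mathbf{x}_u$), one concludes $d_w \geq n - o(n)$.

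The main obstacle is upgrading this to $d_w = n-1$, i.e., showing $w$ is truly universal. The plan is a local surgery: given any $u \ne w$ with $uw \notin E(G)$, form $G'$ by deleting every edge at $u$ and adding the edge $uw$. Then $G'$ is outerplanar because $u$ sits as a pendant attached to $w$ and $G' - u$ is a subgraph of $G$; meanwhile the Rayleigh quotient comparison gives
\[
\mathbf{x}^t (A(G')-A(G))\mathbf{x} \;=\; 2\mathbf{x}_u\bigl(\mathbf{x}_w - \lambda \mathbf{x}_u\bigr) \;=\; 2\mathbf{x}_u\bigl(1 - \lambda \mathbf{x}_u\bigr),
\]
which is strictly positive provided $\mathbf{x}_u < 1/\lambda$. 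The latter follows from the entry bound in the previous step, so $\lambda(G') > \lambda(G)$, contradicting the maximality of $\lambda(G)$. Hence $w$ is adjacent to every other vertex.

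Finally, with $w$ universal we have $G = K_1 \vee F$ for an induced subgraph $F$ on $n-1$ vertices. Since $K_1 \vee F$ is outerplanar, $F$ cannot contain $K_{1,3}$ (else $\{w,\text{center}\}$ and three leaves form a $K_{2,3}$) and cannot contain a cycle (else a cycle in $F$ together with $w$ yields a $K_4$ minor), so $F$ is a linear forest. Among linear forests on $n-1$ vertices, adjoining an edge between endpoints of two distinct path components preserves outerplanarity of $K_1 \vee F$ and strictly increases its spectral radius by Perron-Frobenius strict monotonicity, so the unique maximizer is $F = P_{n-1}$, completing the argument.
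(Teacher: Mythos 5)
First, note that the paper does not prove this statement at all: Theorem \ref{taittobin} is imported from \cite{TT}, so your proposal is being compared against the Tait--Tobin argument, whose overall strategy (a priori bounds on $\lambda_1$, a near-universal vertex $w$, a surgery forcing $d_w=n-1$, then identifying $F$ as a linear forest completable to $P_{n-1}$) your outline does follow. Your first and last steps are fine: $\lambda_1=\Theta(\sqrt{n})$ is correct, and the final step is clean since every linear forest on $n-1$ vertices other than $P_{n-1}$ is a proper subgraph of $P_{n-1}$, so Perron--Frobenius finishes it. The problems are in the middle.

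In step 2, your accounting does not give $d_w\geq n-o(n)$. You bound the contribution of $v\in N(w)$ by $2e(G[N(w)])\leq 4d_w-6$, so your inequality reads $\lambda_1^2\leq 5d_w+2\sum_{v\notin N[w]}\mathbf{x}_v$, which only yields $d_w=\Omega(n)$. (Also, the claim $\mathbf{x}_v=\mathcal{O}(1/\lambda)$ for $v\notin N[w]$ does not follow from $\lambda\mathbf{x}_v\leq d_v$ until you already know $d_v=\mathcal{O}(\sqrt{n})$, which itself needs $d_w\geq n-\mathcal{O}(\sqrt{n})$; there is a circularity in the order of the claims.) The fix is the one used in the present paper's Lemma \ref{d_u lower bound z_u}: every $v\neq w$ is counted at most twice in the double sum by $K_{2,3}$-freeness, and $\sum_{v\neq w}\mathbf{x}_v\leq\frac{1}{\lambda_1}\sum_v d_v\leq\frac{2e(G)}{\lambda_1}=\mathcal{O}(\sqrt{n})$, giving $\lambda_1^2\leq d_w+\mathcal{O}(\sqrt{n})$ directly. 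The more serious gap is in step 3. Your surgery and the identity $\mathbf{x}^t(A(G')-A(G))\mathbf{x}=2\mathbf{x}_u(1-\lambda\mathbf{x}_u)$ are correct, but the condition $\lambda\mathbf{x}_u=\sum_{v\sim u}\mathbf{x}_v<1$ does \emph{not} ``follow from the entry bound in the previous step.'' That bound gives $\mathbf{x}_v=\mathcal{O}(1/\sqrt{n})$ for each $v\neq w$ with an unspecified constant, while $u\notin N[w]$ may have $\Theta(\sqrt{n})$ neighbors inside $B=V(G)\setminus N[w]$, so a priori $\sum_{v\sim u}\mathbf{x}_v=\mathcal{O}(\sqrt{n})\cdot\mathcal{O}(1/\sqrt{n})=\mathcal{O}(1)$, which is not $<1$. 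Closing this requires a second bootstrapping step showing that the \emph{total} eigenweight of $B$ is $\mathcal{O}(1/\sqrt{n})$ (every neighbor of a vertex of $B$ has entry $\mathcal{O}(1/\sqrt{n})$ and $e(B)+e(B,V\setminus B)=\mathcal{O}(\sqrt{n})$), exactly as in Lemma \ref{lsum} of this paper; only then does $\sum_{v\sim u}\mathbf{x}_v=\mathcal{O}(1/\sqrt{n})<1$ and the surgery contradiction go through.
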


\section{Vertex of Maximum Degree}
The main goal of this section is to prove that $G$ has a vertex of degree $n-1$. This is stated in the following theorem. 
\begin{theorem}\label{vertex degree n-1}
For $n$ large enough, we have $d_w=n-1$.
\end{theorem}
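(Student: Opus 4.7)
The plan is to prove $d_w = n-1$ in three stages: pin down the asymptotic sizes of $\lambda_1$ and $|\lambda_n|$; exploit $K_{2,3}$-freeness to force $d_w\ge n-O(\sqrt n)$; and finally rule out any non-neighbor of $w$ by a local modification. I expect the last stage to be the main obstacle, since it requires controlling both the top and bottom eigenvectors simultaneously.

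For the asymptotics, extremality of $G$ gives $\lambda_1-\lambda_n\ge S(K_1\vee P_{n-1})$, and a direct eigenvector test on $K_1\vee P_{n-1}$ shows the right-hand side is $\ge 2\sqrt n-O(1)$. Combined with Theorem~\ref{taittobin} (so $\lambda_1\le \sqrt n+O(1)$) and the general bound $|\lambda_n|\le\lambda_1$, this pins $\lambda_1=\sqrt n+O(1)$ and $-\lambda_n=\sqrt n+O(1)$. Next, $\lambda_1\sum_u\mathbf{x}_u=\sum_v d_v\mathbf{x}_v\le 2|E(G)|\le 4n$ yields $\sum_u\mathbf{x}_u=O(\sqrt n)$ via the outerplanar edge bound $|E(G)|\le 2n-3$. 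Expanding
\[
\lambda_1^2\mathbf{x}_w=\sum_{u\sim w}\sum_{y\sim u}\mathbf{x}_y=\sum_y \mathbf{x}_y\,|N(y)\cap N(w)|
\]
and using $|N(y)\cap N(w)|\le 2$ for $y\ne w$ (since $G$ contains no $K_{2,3}$ subgraph) gives $\lambda_1^2\le d_w+O(\sqrt n)$, whence $d_w\ge n-O(\sqrt n)$. Setting $R:=V\setminus(N(w)\cup\{w\})$, we get $|R|=O(\sqrt n)$, and a short bootstrap of the eigenvector equation at vertices in $N(w)$ and $R$ yields $\mathbf{x}_u=O(1/\sqrt n)$ for all $u\ne w$.

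Assume for contradiction $R\ne\emptyset$ and pick $v\in R$. Form $G'$ by deleting every edge incident to $v$ and adding $vw$, making $v$ a pendant attached to $w$; since adding a pendant to an outerplanar graph preserves outerplanarity, $G'$ is outerplanar. Using $\mathbf{x}$ and $\mathbf{z}$ as Rayleigh test vectors for $A(G')$ gives
\begin{align*}
\lambda_1(G')&\ge\lambda_1+\frac{2\mathbf{x}_v(1-\lambda_1\mathbf{x}_v)}{\|\mathbf{x}\|^2},\\
\lambda_n(G')&\le\lambda_n+\frac{2\mathbf{z}_v(\mathbf{z}_w-\lambda_n\mathbf{z}_v)}{\|\mathbf{z}\|^2}.
\end{align*}
The top correction is nonnegative: $\mathbf{x}_u=O(1/\sqrt n)$ on every neighbor of $v$ gives $\lambda_1\mathbf{x}_v=\sum_{u\sim v}\mathbf{x}_u\le 1$ for large $n$. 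The hard part will be making the bottom correction nonpositive: we need $\mathbf{z}_v(\mathbf{z}_w-\lambda_n\mathbf{z}_v)\le 0$, and since $-\lambda_n\mathbf{z}_v^2>0$, this forces $\mathbf{z}_v\mathbf{z}_w$ to be sufficiently negative. I plan to establish an analogous magnitude bound $|\mathbf{z}_u|=O(1/\sqrt n)$ for $u\ne w'$, show $w=w'$ (so that $|\mathbf{z}_w|=1$) by comparing eigenvalue equations at $w$ and at a hypothetical distinct $w'$, and then pin the sign of $\mathbf{z}_v$ via the eigenvector equation at $v$, replacing $\mathbf{z}$ by $-\mathbf{z}$ if needed. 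If the situation at $w$ is still unfavorable, one instead performs the surgery at $w'$. Once the signs are controlled, the spread strictly increases under the modification, contradicting the maximality of $S(G)$ and forcing $R=\emptyset$, i.e.\ $d_w=n-1$.
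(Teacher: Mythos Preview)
Your overall strategy mirrors the paper's closely: establish $\lambda_1,|\lambda_n|=\sqrt n+O(1)$, prove $d_w\ge n-O(\sqrt n)$ via the $K_{2,3}$-free double-count, show $w=w'$, and then rule out any $v\notin N(w)\cup\{w\}$ by the pendant-surgery $G\mapsto G'$. The genuine gap is in your last stage, specifically in handling the $\lambda_n$ correction when $\mathbf{z}_v$ has the ``wrong'' sign.

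Your Rayleigh bound gives
\[
\lambda_n(G')\le\lambda_n+\frac{2\mathbf{z}_v(\mathbf{z}_w-\lambda_n\mathbf{z}_v)}{\|\mathbf{z}\|^2},
\]
and since $\lambda_n\mathbf{z}_v=\sum_{u\sim v}\mathbf{z}_u$ with all of $v$'s neighbors having small entries, the bracket equals $\mathbf{z}_w+o(1)$. Hence the sign of the correction is governed by the sign of $\mathbf{z}_v\mathbf{z}_w$. Neither of your proposed fixes controls this: replacing $\mathbf{z}$ by $-\mathbf{z}$ leaves the product $\mathbf{z}_v\mathbf{z}_w$ unchanged, and once $w=w'$ is established ``performing the surgery at $w'$'' is the very same surgery. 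There is no way to pin the sign of $\mathbf{z}_v$ a priori from the eigenvector equation at $v$; nothing rules out $\mathbf{z}_v>0$ when $\mathbf{z}_w=1$. The paper closes this gap not by analyzing the sign but by using a \emph{modified test vector} for $\lambda_n(G')$: take $\mathbf{z}^*$ equal to $\mathbf{z}$ except $\mathbf{z}^*_v=-|\mathbf{z}_v|$. Then $\|\mathbf{z}^*\|=\|\mathbf{z}\|$, and the resulting Rayleigh quotient on $A(G')$ yields a correction with the correct sign regardless of $\mathrm{sgn}(\mathbf{z}_v)$.

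A second, smaller gap: your claim that ``$\mathbf{x}_u=O(n^{-1/2})$ on every neighbor of $v$ gives $\lambda_1\mathbf{x}_v=\sum_{u\sim v}\mathbf{x}_u\le 1$'' does not follow, because $v$ can have up to $|R|=\Theta(\sqrt n)$ neighbors inside $R$, and $\sqrt n\cdot O(n^{-1/2})$ is only $O(1)$. You first need the summability estimate $\sum_{u\in R}\mathbf{x}_u=O(n^{-1/2})$ (and its analogue for $|\mathbf{z}|$), which the paper proves separately by bounding $\sum_{u\in R}d_u$ via outerplanarity of $G[R]$ together with $e(R,N(w))\le 2|R|$.
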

As a first step, we will get preliminary upper and lower bounds on the largest and smallest eigenvalues of $G$. First we obtain an upper bound on the spectral radius.

 \begin{lemma}\label{lambda_1 bounds}
 $\lambda_1 \leq \sqrt{n}+1$. 
\end{lemma}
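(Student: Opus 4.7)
The plan is a two-step reduction: first cut down to the fan $F_n := K_1 \vee P_{n-1}$ using Theorem~\ref{taittobin}, and then bound $\lambda_1(F_n)$ directly by a short Perron-eigenvector computation.

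Since $G$ is outerplanar on $n$ vertices, Theorem~\ref{taittobin} gives $\lambda_1(G) \le \lambda_1(F_n)$, so it suffices to prove $\lambda_1(F_n) \le \sqrt{n}+1$. For that, I would let $\mu := \lambda_1(F_n)$, let $\mathbf{y}$ be the Perron eigenvector of $F_n$ normalized so that $\max_u \mathbf{y}_u = 1$, and denote the apex by $p$ and the path vertices by $v_1,\dots,v_{n-1}$. Since $K_{1,n-1} \subseteq F_n$ we have $\mu \ge \sqrt{n-1} > 3$ for $n$ large, so if the maximum of $\mathbf{y}$ were attained at some path vertex $v_{i^*}$, the eigenvalue equation at $v_{i^*}$ would give $\mu = \mu\,\mathbf{y}_{v_{i^*}} \le \mathbf{y}_p + 2 \le 3$, a contradiction. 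Hence $\mathbf{y}_p = 1$.

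Now set $M := \max_i \mathbf{y}_{v_i} \le 1$. For each path vertex $v_i$, the eigenvalue equation at $v_i$, together with the fact that $v_i$ has at most two path-neighbors each with $\mathbf{y}$-value at most $M$, gives $\mu\,\mathbf{y}_{v_i} \le 1 + 2M$. Taking the maximum over $i$ yields $\mu M \le 1 + 2M$, hence $M \le 1/(\mu-2)$. Applying the eigenvalue equation at $p$ then gives
\[
\mu \;=\; \mu\,\mathbf{y}_p \;=\; \sum_{i=1}^{n-1} \mathbf{y}_{v_i} \;\le\; (n-1)M \;\le\; \frac{n-1}{\mu-2}.
\]
Rearranging yields $\mu^2 - 2\mu - (n-1) \le 0$, so the quadratic formula delivers $\mu \le 1+\sqrt{n}$, as required.

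There is no substantive obstacle here: the real content is offloaded to Theorem~\ref{taittobin}, and the remaining bound on $F_n$ is a short quadratic estimate made clean by pinning the top of the Perron vector at the apex. The only small point requiring care is to confirm $\mu > 2$ so that the step $M \le 1/(\mu-2)$ is meaningful, which is automatic from $\mu \ge \sqrt{n-1}$ once $n$ is large.
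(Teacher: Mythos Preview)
Your proof is correct. Both you and the paper reduce to $F_n = K_1 \vee P_{n-1}$ via Theorem~\ref{taittobin}, but then diverge: the paper simply embeds $F_n$ as a subgraph of the wheel $K_1 \vee C_{n-1}$ and computes $\lambda_1(K_1 \vee C_{n-1})$ exactly as $1+\sqrt n$ using the obvious two-cell equitable partition, so subgraph monotonicity finishes it. Your approach instead bounds $\lambda_1(F_n)$ directly by a Perron-eigenvector argument on $F_n$ itself. It is not a coincidence that your inequality $\mu^2 - 2\mu - (n-1) \le 0$ is precisely the characteristic polynomial of the quotient matrix of $K_1 \vee C_{n-1}$: your estimate ``each path vertex has at most two path-neighbors'' is exactly what becomes an equality when the path is closed into a cycle, so the two arguments are really the same computation in different clothing. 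The paper's version is a bit shorter and needs no case split on where the Perron maximum sits; your version is self-contained (no auxiliary graph, no equitable partition) and the bootstrap technique you use --- bounding $M$ via the eigenvalue equation at a path vertex, then feeding that into the equation at the apex --- generalizes more readily. The one caveat you already flag, that $\mu > 2$ (indeed $\mu > 3$) is needed, is harmless since Theorem~\ref{taittobin} is only stated for large $n$ anyway.
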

\begin{proof}
We define the graph $G_1$ to be the graph $K_1 \vee P_{n-1}$. By Theorem \ref{taittobin}, we know that any outerplanar graph on sufficiently many vertices cannot have a  spectral radius larger than that of $G_1$. Now define $G_2$ as $G_1$ with another edge joining the endpoints of the path, so $G_2 = K_1 \vee C_{n-1}$. Clearly $G_1$ is a subgraph of $G_2$. Putting all this together gives us 
\begin{align*}
    \lambda_1(G)\leq\lambda_1(G_1)<\lambda_1(G_2)= \sqrt{n}+1,
\end{align*}
where the last equality can be calculated using an equitable partition with two parts (the dominating vertex and the cycle).
\end{proof}
Next we bound $|\lambda_n|$.
\begin{lemma}\label{lambda_n crude bound}
For $n$ sufficiently large, $\sqrt{n-1}-2\leq |\lambda_n| \leq \sqrt{n-1}+2$.
\end{lemma}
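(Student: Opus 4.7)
The plan is in two steps. For the upper bound, note that the adjacency matrix $A$ of $G$ is entrywise nonnegative, so by Perron--Frobenius its spectral radius equals $\lambda_1$; in particular $|\lambda_n|\le \lambda_1$. Lemma~\ref{lambda_1 bounds} then gives $|\lambda_n|\le \sqrt{n}+1$, and since $\sqrt{n}\le\sqrt{n-1}+1$ (by squaring), we conclude $|\lambda_n|\le \sqrt{n-1}+2$. This is the easy direction.

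For the lower bound, the key idea is to leverage the extremality of $G$: since $K_1\vee P_{n-1}$ is itself outerplanar and $G$ maximizes spread over the family of $n$-vertex outerplanar graphs, we have $S(G)\ge S(K_1\vee P_{n-1})$. Combining this with the upper bound $\lambda_1(G)\le \sqrt{n}+1$ from Lemma~\ref{lambda_1 bounds} reduces matters to showing $S(K_1\vee P_{n-1})\ge 2\sqrt{n-1}$, because then
\[
|\lambda_n(G)|\;=\;S(G)-\lambda_1(G)\;\ge\; 2\sqrt{n-1}-\sqrt{n}-1\;\ge\; \sqrt{n-1}-2,
\]
where the last inequality again uses $\sqrt{n}\le\sqrt{n-1}+1$.

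To establish the spread bound for the fan $K_1\vee P_{n-1}$, I would plug the pair of test vectors $\mathbf{y}_{\pm}=(\pm\sqrt{n-1},1,1,\ldots,1)^T$ (with $\pm\sqrt{n-1}$ on the apex and $1$ on each of the $n-1$ path vertices) into the Rayleigh quotients \eqref{Rayleigh max} and \eqref{Rayleigh min}. A direct count of the $(n-1)$ apex edges and $(n-2)$ path edges yields $\mathbf{y}_{\pm}^T\mathbf{y}_{\pm}=2(n-1)$ and $\mathbf{y}_{\pm}^T A\mathbf{y}_{\pm}=\pm 2(n-1)^{3/2}+2(n-2)$, giving the symmetric pair of bounds $\lambda_1(K_1\vee P_{n-1})\ge \sqrt{n-1}+1-\tfrac{1}{n-1}$ and $\lambda_n(K_1\vee P_{n-1})\le -\sqrt{n-1}+1-\tfrac{1}{n-1}$; the two correction terms cancel on subtraction, yielding exactly $S(K_1\vee P_{n-1})\ge 2\sqrt{n-1}$, as required.

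The argument has no real obstacle; the only subtle point is that the apex weight $\sqrt{n-1}$ must be chosen so that the lower-order corrections in the two Rayleigh quotients cancel rather than accumulate. A less careful weighting (say apex weight $\sqrt{n}$, or computing the two bounds independently using Weyl's inequality applied to $K_1\vee C_{n-1}$) loses an extra constant and produces a lower bound of only $\sqrt{n-1}-3$ or $\sqrt{n}-3$, which would not suffice for the stated $\sqrt{n-1}-2$.
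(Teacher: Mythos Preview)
Your proof is correct, but it takes a slightly more laborious route than the paper's. For the upper bound you do exactly what the paper does. For the lower bound, both you and the paper compare $S(G)$ against the spread of a specific outerplanar graph to get $S(G)\ge 2\sqrt{n-1}$, and then subtract the bound from Lemma~\ref{lambda_1 bounds}. The difference is in the choice of comparison graph: the paper uses the star $K_{1,n-1}$, whose spectrum $\{\sqrt{n-1},\,0^{(n-2)},\,-\sqrt{n-1}\}$ gives $S(K_{1,n-1})=2\sqrt{n-1}$ with no computation at all, whereas you use the fan $K_1\vee P_{n-1}$ and must run a Rayleigh-quotient argument with carefully chosen test vectors to recover exactly the same bound. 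Your calculation is correct and the cancellation you highlight does occur, but the ``subtle point'' you flag about apex weighting is an artifact of this choice of graph; switching to the star removes it entirely.
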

\begin{proof}
 The upper bound on $|\lambda_n|$ follows immediately from Lemma \ref{lambda_1 bounds} and the well-known fact $\lambda_1\geq |\lambda_n|$ for any graph. Now to get the lower bound, since $G$ is the outerplanar graph on $n$ vertices that maximizes spread, we have
\begin{align*}
    S(G)&\geq S(K_{1,n-1})\\
    &=\lambda_1(K_{1,n-1})-\lambda_n(K_{1,n-1})\\
    &=\sqrt{n-1}-(-\sqrt{n-1})\\
    &=2\sqrt{n-1}.
\end{align*}
So
\[
2\sqrt{n-1} \leq \lambda_1(G) - \lambda_n(G) \leq \sqrt{n}+1 - \lambda_n(G) < \sqrt{n-1}+2 - \lambda_n(G).
\]
Hence we have 
\[
-\lambda_n \geq \sqrt{n-1}-2.
\]
\end{proof}
Essentially the same proof also gives a lower bound for $\lambda_1$.
\begin{corollary}\label{lambda_1 lower bound}
For $n$ large enough we have $\lambda_1\geq \sqrt{n-1}-2$. 
\end{corollary}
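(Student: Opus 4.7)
The plan is to mirror the proof of Lemma \ref{lambda_n crude bound}, this time isolating $\lambda_1$ instead of $-\lambda_n$. Since $G$ is assumed to maximize spread among $n$-vertex outerplanar graphs, and the star $K_{1,n-1}$ is outerplanar with spread $2\sqrt{n-1}$, we immediately have
\[
2\sqrt{n-1} \;\leq\; S(G) \;=\; \lambda_1(G) - \lambda_n(G).
\]
The natural next step is to absorb the $-\lambda_n(G)$ term using an already-established upper bound. Lemma \ref{lambda_n crude bound} supplies exactly what we need: $-\lambda_n(G) = |\lambda_n(G)| \leq \sqrt{n-1}+2$ for $n$ sufficiently large. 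Substituting this into the displayed inequality and rearranging yields $\lambda_1(G) \geq \sqrt{n-1} - 2$, which is the desired bound.

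There is no real obstacle here since the main work has already been done in the previous two results. The only subtlety to double-check is that we are not reasoning circularly: Lemma \ref{lambda_n crude bound} used the upper bound on $\lambda_1$ from Lemma \ref{lambda_1 bounds} to bound $|\lambda_n|$ from above, and then combined this with the spread comparison to bound $-\lambda_n$ from below. Here we instead combine the upper bound on $|\lambda_n|$ with the same spread comparison to bound $\lambda_1$ from below, so the logic is parallel rather than circular. This is precisely why the author's remark "Essentially the same proof also gives a lower bound for $\lambda_1$" is accurate, and the write-up should be only a few lines.
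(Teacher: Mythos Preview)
Your proposal is correct and matches the paper's intended argument: the paper states only that ``essentially the same proof also gives a lower bound for $\lambda_1$,'' and what you wrote is exactly that same proof with the roles of $\lambda_1$ and $-\lambda_n$ swapped. Your check that the reasoning is parallel rather than circular is appropriate, since the upper bound $|\lambda_n|\leq \sqrt{n-1}+2$ in Lemma~\ref{lambda_n crude bound} depends only on Lemma~\ref{lambda_1 bounds} and the inequality $\lambda_1\geq |\lambda_n|$, not on anything derived here.
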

We shall use Lemma \ref{lambda_n crude bound} to obtain a lower bound on the degree of each vertex.
\begin{lemma}\label{d_u lower bound z_u}
Let $u$ be an arbitrary vertex in $G$. Then $d_u>|\mathbf{z}_u|n-\mathcal{O}(\sqrt{n})$ and $d_u>\mathbf{x}_u n-\mathcal{O}(\sqrt{n})$.
\end{lemma}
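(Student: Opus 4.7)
The plan is to apply the eigenvalue identity for $A^2$ to both $\mathbf{x}$ and $\mathbf{z}$, exploit the fact that an outerplanar graph (being $K_{2,3}$-subgraph-free) has at most two common neighbors for any pair of distinct vertices, and then convert the resulting inequality into the required degree bound via a short case analysis.

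First I would establish the $\ell_1$ bounds $\sum_u \mathbf{x}_u = \mathcal{O}(\sqrt{n})$ and $\sum_u |\mathbf{z}_u| = \mathcal{O}(\sqrt{n})$. Summing $\lambda_1 \mathbf{x}_u = \sum_{v \sim u}\mathbf{x}_v$ over $u$ gives $\lambda_1 \sum_u \mathbf{x}_u = \sum_v d_v \mathbf{x}_v \leq 2|E(G)| \leq 4n-6$, using $\mathbf{x}_v \leq 1$ and the outerplanar edge bound $|E(G)| \leq 2n-3$. Dividing by $\lambda_1 \geq \sqrt{n-1}-2$ from Corollary~\ref{lambda_1 lower bound} yields the claim for $\mathbf{x}$. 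For $\mathbf{z}$, the triangle inequality gives $|\lambda_n||\mathbf{z}_u| \leq \sum_{v\sim u}|\mathbf{z}_v|$; summing and dividing by $|\lambda_n| \geq \sqrt{n-1}-2$ from Lemma~\ref{lambda_n crude bound} finishes.

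Next, since $G$ is $K_{2,3}$-subgraph-free, any two distinct vertices share at most two common neighbors, so $(A^2)_{uv} \leq 2$ for $v \neq u$ while $(A^2)_{uu} = d_u$. Writing $\lambda_n^2 \mathbf{z}_u = d_u\mathbf{z}_u + \sum_{v \neq u}(A^2)_{uv}\mathbf{z}_v$ and applying the triangle inequality gives
\[
d_u|\mathbf{z}_u| \geq \lambda_n^2|\mathbf{z}_u| - 2\sum_v |\mathbf{z}_v| \geq (n - \mathcal{O}(\sqrt{n}))|\mathbf{z}_u| - \mathcal{O}(\sqrt{n}) \geq n|\mathbf{z}_u| - \mathcal{O}(\sqrt{n}),
\]
using $\lambda_n^2 \geq n - \mathcal{O}(\sqrt{n})$ (squaring Lemma~\ref{lambda_n crude bound}), the $\ell_1$ bound from above, and $|\mathbf{z}_u| \leq 1$. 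Letting $c$ denote the implicit constant, if $|\mathbf{z}_u| \leq c/\sqrt{n}$ then $n|\mathbf{z}_u| - c\sqrt{n} \leq 0 \leq d_u$; otherwise dividing by $|\mathbf{z}_u|$ gives $d_u \geq n - c\sqrt{n}/|\mathbf{z}_u|$, and the identity
\[
\bigl(n - c\sqrt{n}/|\mathbf{z}_u|\bigr) - \bigl(n|\mathbf{z}_u| - c\sqrt{n}\bigr) = (1-|\mathbf{z}_u|)\bigl(n - c\sqrt{n}/|\mathbf{z}_u|\bigr)
\]
has both factors on the right non-negative when $c/\sqrt{n} < |\mathbf{z}_u| \leq 1$, so $d_u \geq n|\mathbf{z}_u| - c\sqrt{n}$. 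The same argument, using $\lambda_1^2 \geq n - \mathcal{O}(\sqrt{n})$ from Corollary~\ref{lambda_1 lower bound}, handles $\mathbf{x}_u$.

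The main obstacle is the final algebraic step: a naive division of $d_u|\mathbf{z}_u| \geq n|\mathbf{z}_u| - \mathcal{O}(\sqrt{n})$ by $|\mathbf{z}_u|$ yields $d_u \geq n - \mathcal{O}(\sqrt{n})/|\mathbf{z}_u|$, which becomes meaningless as $|\mathbf{z}_u| \to 0$. The case split works because exactly in that regime the target inequality is already vacuous, its right-hand side being non-positive.
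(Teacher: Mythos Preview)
Your proof is correct and follows essentially the same route as the paper: both use the $A^2$ eigenvalue identity, the $K_{2,3}$-freeness bound $(A^2)_{uv}\le 2$ for $v\neq u$, and the $\ell_1$-estimate $\sum_v|\mathbf{z}_v|=\mathcal{O}(\sqrt{n})$ obtained by summing the eigenvector equation. The only difference is that the paper applies the triangle inequality in the other direction, writing $\lambda_n^2|\mathbf{z}_u|\le d_u|\mathbf{z}_u|+\sum_{v\ne u}(A^2)_{uv}|\mathbf{z}_v|\le d_u+\mathcal{O}(\sqrt{n})$ (invoking $|\mathbf{z}_u|\le 1$ on the $d_u$ term immediately), which gives the desired bound directly and sidesteps your case split.
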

\begin{proof}
We will show the first part explicitly. When $\lambda_n$ is the smallest eigenvalue for our graph, we have
\begin{align*}
    |\lambda_n^2\mathbf{z}_u|&=\left|\sum\limits_{y\sim u}\sum\limits_{v\sim y} \mathbf{z}_v\right|\\
    &\leq d_u+\sum\limits_{y\sim u}\sum_{\substack{v\sim y\\ v\not=u}}|\mathbf{z}_v|.
    \end{align*}
Recall that an outerplanar graph cannot have a $K_{2,3}$. This implies every vertex in $G$ has at most two neighbors in $N(u)$, meaning the eigenvector entry for each vertex contained in the neighborhood of $N(u)$ can be counted at most twice. Hence  $\sum\limits_{y\sim u}\sum\limits_{v\sim y}|\mathbf{z}_v|\leq 2\sum\limits_{v\not=w}|\mathbf{z}_v|$. Note \[|\lambda_n| |\mathbf{z}_v|\leq \sum\limits_{v'\sim v}|\mathbf{z}_{v'}|\leq \sum\limits_{v'\sim v} 1=d_v.\] So we have
 $$\sum_{y\sim u} \sum_{v\sim y } |\mathbf{z}_v| \leq 2\sum_{v \not=w} |\mathbf{z}_v|\leq  \frac{2}{|\lambda_n|}\sum\limits_{v\not=w}d_v  \leq \frac{4e(G)}{|\lambda_n|}\leq \frac{4(2n-3)}{|\lambda_n|},$$ as $e(G)\leq 2n-3$ by outerplanarity. Combining and using Lemma \ref{lambda_n crude bound}, we have 
 \[
 (\sqrt{n-1}-2)^2|\mathbf{z}_u |\leq |\lambda_n^2 \mathbf{z}_u| \leq d_u + \frac{4(2n-3)}{|\lambda_n|} \leq d_u + \frac{8n}{\sqrt{n-1}-2},
 \]
 
 for $n$ sufficiently large. Isolating $d_u$ gives the result.  A similar proof can be written to justify the lower bound with respect to $\mathbf{x}$ and we omit these details. 
 

\end{proof}

\begin{lemma}\label{lmac3}
We have $d_w>n-\mathcal{O}(\sqrt{n})$ and $d_{w'}>n-\mathcal{O}(\sqrt{n})$. For every other vertex $u$ we get $|\mathbf{z}_u|, \mathbf{x}_u = \mathcal{O}\left(\frac{1}{\sqrt{n}}\right)$, for n sufficiently large.
\end{lemma}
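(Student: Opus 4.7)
The plan is to chain together Lemma~\ref{d_u lower bound z_u} with the structural constraint coming from $K_{2,3}$-freeness.

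First I would handle the two degree bounds. Since $\mathbf{x}$ was normalized so that $\mathbf{x}_w = 1$, applying Lemma~\ref{d_u lower bound z_u} to $u = w$ yields immediately
\[
d_w > \mathbf{x}_w \cdot n - \mathcal{O}(\sqrt{n}) = n - \mathcal{O}(\sqrt{n}).
\]
The analogous computation applied to $u = w'$, using $|\mathbf{z}_{w'}| = 1$, gives $d_{w'} > n - \mathcal{O}(\sqrt{n})$.

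Next, I would show that every vertex $u \notin \{w, w'\}$ has $d_u = \mathcal{O}(\sqrt{n})$. The key tool is that $G$ is $K_{2,3}$-free as a subgraph, so any two distinct vertices share at most two common neighbors; in particular $|N(u) \cap N(w)| \leq 2$. Partition $V(G) = \{w\} \cup N(w) \cup R$, where $|R| = n - 1 - d_w = \mathcal{O}(\sqrt{n})$ by the first step. Then
\[
d_u \leq \underbrace{|N(u) \cap \{w\}|}_{\leq 1} + \underbrace{|N(u) \cap N(w)|}_{\leq 2} + \underbrace{|N(u) \cap R|}_{\leq |R|} \leq 3 + \mathcal{O}(\sqrt{n}) = \mathcal{O}(\sqrt{n}).
\]

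Finally, I would substitute this back into Lemma~\ref{d_u lower bound z_u}. For $u \notin \{w, w'\}$,
\[
\mathbf{x}_u \cdot n < d_u + \mathcal{O}(\sqrt{n}) = \mathcal{O}(\sqrt{n}),
\]
so $\mathbf{x}_u = \mathcal{O}(1/\sqrt{n})$, and the same argument with $|\mathbf{z}_u|$ gives $|\mathbf{z}_u| = \mathcal{O}(1/\sqrt{n})$.

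The only subtle point is realizing that the $K_{2,3}$-free bound $|N(u) \cap N(w)| \leq 2$, paired with $|V \setminus (N(w) \cup \{w\})| = \mathcal{O}(\sqrt{n})$, is enough to force $d_u = \mathcal{O}(\sqrt{n})$ even without knowing in advance whether $u$ is a neighbor of $w$; everything else is a direct application of the previous lemma. I do not expect any real obstacle here, since the whole argument is a short bootstrap: Lemma~\ref{d_u lower bound z_u} turns eigenvector mass into degree at $w$ (and $w'$), and outerplanarity turns large degree at $w$ into small degree everywhere else, which in turn caps the eigenvector mass at all other vertices.
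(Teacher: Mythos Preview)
Your argument is correct and follows essentially the same route as the paper: apply Lemma~\ref{d_u lower bound z_u} at $w$ and $w'$ to get the degree bounds, use $K_{2,3}$-freeness together with $|V\setminus(N(w)\cup\{w\})|=\mathcal{O}(\sqrt{n})$ to force $d_u=\mathcal{O}(\sqrt{n})$ for every $u\neq w$, and feed that back into Lemma~\ref{d_u lower bound z_u} to bound $\mathbf{x}_u$ and $|\mathbf{z}_u|$. Your explicit partition $\{w\}\cup N(w)\cup R$ spells out the step the paper leaves implicit, but the content is the same.
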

\begin{proof}
The bound on $d_w$ and $d_{w'}$ follows immediately from the previous lemma and the normalization that $|\mathbf{z}_{w'}| = \mathbf{x}_w=1$. Now consider any other vertex $u$. We know that $G$ contains no $K_{2,3}$ and hence can have at most $2$ common neighbors with $w$. Thus $d_u = \mathcal{O}(\sqrt{n})$. By Lemma \ref{d_u lower bound z_u}, we have that there are constants $c_1$ and $c_2$ such that 
\[
c_1\sqrt{n} > d_u\mathbf{x}_u - c_2\sqrt{n},
\]
and 
\[
c_1\sqrt{n} > d_u|\mathbf{z}_u| - c_2\sqrt{n},
\]
for sufficiently large $n$. This implies the result.
\end{proof}

If $w$ and $w'$ were distinct vertices, then for sufficiently large $n$ they would share many neighbors, contradicting outerplanarity. Hence we  immediately have the following important fact.
\begin{corollary}\label{w and w'}
For $n$ sufficiently large we have $w=w'$.
\end{corollary}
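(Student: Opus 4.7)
The plan is to argue by contradiction: assume $w \neq w'$ and show that this forces $w$ and $w'$ to have many common neighbors, violating the $K_{2,3}$-free property of outerplanar graphs.

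First I would apply Lemma \ref{lmac3} to conclude that $d_w \geq n - c\sqrt{n}$ and $d_{w'} \geq n - c\sqrt{n}$ for some constant $c$ and all sufficiently large $n$. Then, assuming $w \neq w'$, I would count common neighbors by inclusion-exclusion inside $V(G) \setminus \{w,w'\}$, a set of size $n-2$. Since $N(w) \setminus \{w'\}$ and $N(w') \setminus \{w\}$ are both subsets of this set of sizes at least $d_w - 1$ and $d_{w'} - 1$, we get
\[
|N(w) \cap N(w')| \geq (d_w - 1) + (d_{w'} - 1) - (n-2) \geq n - 2c\sqrt{n}.
\]
For $n$ sufficiently large, this quantity exceeds $2$, so $w$ and $w'$ together with three of their common neighbors form a $K_{2,3}$ subgraph, contradicting outerplanarity (as noted in the Preliminaries). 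Hence $w = w'$.

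There is no real obstacle here; the argument is essentially the one sketched in the paragraph preceding the corollary. The only thing to keep track of is being careful with the two vertices $w$ and $w'$ possibly being adjacent to each other, which is handled by the $-1$ terms in the inclusion-exclusion.
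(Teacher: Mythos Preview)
Your proof is correct and is exactly the argument the paper sketches in the sentence preceding the corollary: if $w\neq w'$, the degree bounds from Lemma~\ref{lmac3} force more than two common neighbors, yielding a $K_{2,3}$ subgraph and contradicting outerplanarity. You have simply filled in the inclusion--exclusion details that the paper leaves implicit.
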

Hence from now on, we will denote the vertex in Corollary \ref{w and w'} by $w$, and furthermore for the remainder of the paper we will also assume that $\mathbf{z}_w = 1$ without loss of generality. Before deriving our next result quantifying the other entries of $\mathbf{z}$, we first need to define an important vertex set.
\begin{definition}\label{B def}
Recall that $w$ is the fixed vertex of maximum degree in $G$. Let $B=V(G)\backslash(N(w)\cup\{w\})$.
\end{definition}
Now we consider the $\mathbf{z}_u$ eigenvector entries of vertices in $B$. At the moment we know that each eigenvector entry for a vertex in $B$ has order at most $\frac{1}{\sqrt{n}}$. The next lemma shows that in fact the sum of all of the eigenvector entries of $B$ has this order. 

\begin{lemma}\label{lsum}For $n$ large enough, we have that 
$ \sum\limits_{u\in B}|\mathbf{z}_u|$ and $ \sum\limits_{u\in B}\mathbf{x}_u$ are each $ \mathcal{O}\left(\frac{1}{\sqrt{n}}\right)$.
\end{lemma}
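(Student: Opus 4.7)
My plan is to apply the eigenvalue equation at each vertex of $B$, sum over $B$, switch the order of summation, and then control the resulting weighted count using outerplanarity. I will give the argument for $\mathbf{z}$; the argument for $\mathbf{x}$ is identical (in fact a bit easier, since Perron--Frobenius lets us drop absolute values), using $\lambda_1 = \Theta(\sqrt{n})$ from Lemma \ref{lambda_1 bounds} and Corollary \ref{lambda_1 lower bound}.

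First I would record two size estimates that follow directly from what we already have. From Lemma \ref{lmac3} we have $d_w \ge n - \mathcal{O}(\sqrt{n})$, so $|B| = n - 1 - d_w = \mathcal{O}(\sqrt{n})$, and for every $u \ne w$ we have $|\mathbf{z}_u| = \mathcal{O}(1/\sqrt{n})$. For $u \in B$, the eigenvalue equation and the triangle inequality give
\[
|\lambda_n|\,|\mathbf{z}_u| \;\le\; \sum_{v \sim u} |\mathbf{z}_v|.
\]
Summing over $u \in B$ and swapping the order of summation yields
\[
|\lambda_n| \sum_{u \in B} |\mathbf{z}_u| \;\le\; \sum_{v \in V(G)} |\mathbf{z}_v| \cdot |N(v) \cap B|.
\]

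The term $v = w$ contributes $0$ since $N(w) \cap B = \emptyset$ by the definition of $B$. The remaining contributions split into $v \in N(w)$ and $v \in B$, and outerplanarity handles both. For $v \in N(w)$, any $u \in B$ can share at most two neighbors with $w$ (else $\{w,u\}$ and three common neighbors would form a $K_{2,3}$), so $\sum_{v \in N(w)} |N(v) \cap B| \le 2|B| = \mathcal{O}(\sqrt{n})$. For $v \in B$, the subgraph $G[B]$ is outerplanar, so $\sum_{v \in B} |N(v) \cap B| = 2\,e(G[B]) \le 2(2|B| - 3) = \mathcal{O}(\sqrt{n})$. Using $|\mathbf{z}_v| = \mathcal{O}(1/\sqrt{n})$ on the non-$w$ vertices, each of these two sums is $\mathcal{O}(1/\sqrt{n}) \cdot \mathcal{O}(\sqrt{n}) = \mathcal{O}(1)$. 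Dividing by $|\lambda_n| = \Theta(\sqrt{n})$ from Lemma \ref{lambda_n crude bound} finishes the bound for $\mathbf{z}$, and the identical argument (with $\lambda_1$) handles $\mathbf{x}$.

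The only real obstacle is making sure the budget closes: every ingredient must be of order $\sqrt{n}$ to be absorbed into the factor of $|\lambda_n|$. This is exactly what outerplanarity gives, via (i) the $K_{2,3}$-freeness bound on codegrees with $w$, and (ii) the $2|B|-3$ edge bound inside $B$. The fact that $|B| = \mathcal{O}(\sqrt{n})$ is essential in both places, so the result really uses all of the preceding lemmas in an essential way.
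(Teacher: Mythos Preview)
Your proof is correct and follows essentially the same argument as the paper: both apply the eigenvalue inequality $|\lambda_n|\,|\mathbf{z}_u|\le \sum_{v\sim u}|\mathbf{z}_v|$ for $u\in B$, sum over $B$, and then control $\sum_{u\in B} d_u$ (equivalently $e(B,N(w))+2e(G[B])$) by $\mathcal{O}(\sqrt{n})$ using $K_{2,3}$-freeness and the outerplanar edge bound, combined with $|\mathbf{z}_v|=\mathcal{O}(1/\sqrt{n})$ for $v\ne w$ and $|\lambda_n|=\Theta(\sqrt{n})$. The only difference is cosmetic ordering---you swap the summation before bounding the entries, whereas the paper bounds each $|\mathbf{z}_v|$ first and then sums degrees.
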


\begin{proof}
Let $u\in B$. Since $u$ is not adjacent to $w$, all of the neighbors of $u$ have eigenvector entry of order at most $1/\sqrt{n}$ and the size of $B$ is also of order at most $1/\sqrt{n}$ by Lemma \ref{lmac3}. Hence there is a constant $C$ such that 
\[
|\lambda_n||\mathbf{z}_u| \leq \sum_{v\sim u}|\mathbf{z}_v| \leq \frac{C d_u}{\sqrt{n}},
\]
and $|B| \leq C\sqrt{n}$.  Now 
$$ \sum_{u\in B} |\mathbf{z}_u| \leq \frac{1}{|\lambda_n|}\sum_{u\in B}\left(\frac{Cd_u}{\sqrt{n}}\right)\leq \frac{C}{|\lambda_n| \sqrt{n}}(e(B, V(G)\setminus B)+2e(B)).$$
Each vertex in $B$ is connected to at most two vertices in $N(u)$, so $e(B, V(G)\setminus B)\leq 2|B|\leq 2C\sqrt{n}$. The graph induced on $B$ is outerplanar, so $e(B)\leq 2|B|-3<2C\sqrt{n}$. Finally, using Lemma \ref{lambda_n crude bound}, we get the required result. A slightly modified version of this argument proves the bound on $ \sum\limits_{u\in B}\mathbf{x}_u$.
\end{proof}

We will use Lemma \ref{lsum} to show that $B$ is empty, and this will complete the proof of Theorem \ref{vertex degree n-1}. First we define the following alteration of $G$. Let $t$ be an arbitrary vertex in $B$ (if it exists). 

\begin{definition}\label{G star}
Let $G^*$ be the graph defined such that its adjacency matrix $A^*$ satisfies $$A^*_{ij}=\begin{cases} 1 &$if $ i=w, j=t\\
0 &$if $ i=t, j\not=w\\
A_{ij} &$otherwise$
\end{cases}$$
That is, to get $G^*$ from $G$ we add an edge from $t$ to $w$ and remove all other edges incident with $t$. In particular, the only neighbor of the $t$ vertex in $G^*$ is $w$.
\end{definition}

\begin{lemma}\label{B empty}
For large enough $n$, $B$ is empty.
\end{lemma}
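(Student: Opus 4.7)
The plan is to derive a contradiction from the assumption $B\neq\emptyset$. Fix an arbitrary $t\in B$; I would show that the graph $G^*$ from Definition \ref{G star} is outerplanar and has strictly larger spread than $G$, contradicting the extremality of $G$. Outerplanarity of $G^*$ is immediate since $G^*$ is obtained from $G$ by deleting some edges at $t$ and then attaching $t$ as a pendant of $w$ (which can be placed on the outer face), and the strategy is to compare $\lambda_1$ and $\lambda_n$ separately via Rayleigh quotients.

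The first step is to sharpen the eigenvector bounds at $t$ from Lemma \ref{lmac3}. Since $t\notin N(w)$, every neighbor of $t$ lies in $(N(t)\cap N(w))\cup(N(t)\cap B)$. The first set has at most $2$ elements since $G$ has no $K_{2,3}$, each contributing $\mathcal{O}(1/\sqrt n)$ to any eigenvector by Lemma \ref{lmac3}, and Lemma \ref{lsum} bounds the total $\mathbf{x}$- or $|\mathbf{z}|$-mass on $B$ by $\mathcal{O}(1/\sqrt n)$. The eigenvalue equations
\[
\lambda_1\mathbf{x}_t=\sum_{v\sim t}\mathbf{x}_v,\qquad \lambda_n\mathbf{z}_t=\sum_{v\sim t}\mathbf{z}_v
\]
then give $\lambda_1\mathbf{x}_t,\,|\lambda_n\mathbf{z}_t|=\mathcal{O}(1/\sqrt n)$, and dividing by $\lambda_1,\,|\lambda_n|=\Theta(\sqrt n)$ yields the refined bound $\mathbf{x}_t,\,|\mathbf{z}_t|=\mathcal{O}(1/n)$.

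With this refinement, both Rayleigh computations become routine. For $\lambda_1$, use $\mathbf{x}$ as a test vector for $G^*$: expanding the quadratic form edge-by-edge gives
\[
\mathbf{x}^T A^*\mathbf{x}-\mathbf{x}^T A\mathbf{x}=2\mathbf{x}_t\bigl(\mathbf{x}_w-\textstyle\sum_{v\sim_G t}\mathbf{x}_v\bigr)=2\mathbf{x}_t(1-\lambda_1\mathbf{x}_t),
\]
which is strictly positive for large $n$ (since $\mathbf{x}_t>0$ by Perron--Frobenius and $\lambda_1\mathbf{x}_t=\mathcal{O}(1/\sqrt n)$), so $\lambda_1(G^*)>\lambda_1(G)$. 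For $\lambda_n$, take the test vector $\mathbf{z}^*$ that agrees with $\mathbf{z}$ off $t$ and has $\mathbf{z}^*_t=1/\lambda_n$; a parallel computation yields
\[
\frac{(\mathbf{z}^*)^T A^*\mathbf{z}^*}{(\mathbf{z}^*)^T\mathbf{z}^*}-\lambda_n=\frac{1/\lambda_n-\lambda_n\mathbf{z}_t^2}{\|\mathbf{z}\|^2+1/\lambda_n^2-\mathbf{z}_t^2}.
\]
The numerator equals $1/\lambda_n+\mathcal{O}(n^{-3/2})<0$ and the denominator is positive, hence $\lambda_n(G^*)<\lambda_n(G)$. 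Combining the two strict inequalities gives $S(G^*)>S(G)$, the desired contradiction, so $B=\emptyset$.

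The main obstacle is establishing the refinement $\mathbf{x}_t,\,|\mathbf{z}_t|=\mathcal{O}(1/n)$; without it, the quantities $1-\lambda_1\mathbf{x}_t$ and $1/\lambda_n-\lambda_n\mathbf{z}_t^2$ could have the wrong sign or insufficient magnitude, and both Rayleigh perturbations would fail to produce strict inequalities. This is precisely where Lemma \ref{lsum}, rather than the weaker per-vertex bound of Lemma \ref{lmac3}, is essential.
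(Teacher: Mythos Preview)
Your argument is correct and follows the same overall strategy as the paper: assume $t\in B$ exists, pass to $G^*$, and use Rayleigh quotients to show $S(G^*)>S(G)$. The only real difference is the choice of test vector for $\lambda_n$. The paper keeps $\mathbf{z}^*$ equal to $\mathbf{z}$ except for flipping the sign of $\mathbf{z}_t$ to $-|\mathbf{z}_t|$; this preserves the norm and yields directly
\[
S(G^*)-S(G)\ \ge\ \frac{2\mathbf{x}_t}{\|\mathbf{x}\|^2}\Bigl(1-\sum_{v\sim t}\mathbf{x}_v\Bigr)+\frac{2|\mathbf{z}_t|}{\|\mathbf{z}\|^2}\Bigl(1-\Bigl|\sum_{v\sim t}\mathbf{z}_v\Bigr|\Bigr),
\]
so one only needs $\sum_{v\sim t}\mathbf{x}_v$ and $\bigl|\sum_{v\sim t}\mathbf{z}_v\bigr|$ to be $o(1)$, which is exactly the content of Lemmas~\ref{lmac3} and~\ref{lsum}. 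Your choice $\mathbf{z}^*_t=1/\lambda_n$ is equally natural (it is the ``correct'' entry for a pendant of $w$) and gives the clean identity you wrote down; the price is that you must first upgrade $|\mathbf{z}_t|$ from $\mathcal{O}(n^{-1/2})$ to $\mathcal{O}(n^{-1})$ so that $\lambda_n\mathbf{z}_t^2$ is lower order than $1/\lambda_n$. That upgrade is itself immediate from $|\lambda_n\mathbf{z}_t|=\bigl|\sum_{v\sim t}\mathbf{z}_v\bigr|=\mathcal{O}(n^{-1/2})$, so the two arguments are essentially equivalent, with the paper's sign-flip being marginally slicker in that it sidesteps the extra bookkeeping.
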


\begin{proof}
Assume for contradiction that $B$ is nonempty. Then there is a vertex $t$ such that $t\not\sim w$. Define $G^*$ as in Definition \ref{G star}. Furthermore, we define the vector $\mathbf{z}^*$ which slightly modifies $\mathbf{z}$ as follows.
\[
\mathbf{z}^*_u = \begin{cases}
\mathbf{z}_u & \mbox{if $u\not=t$}\\
-|\mathbf{z}_u| & \mbox{if $u=t$}
\end{cases}
\]
That is, if $\mathbf{z}_t<0$ then $\mathbf{z}^*$ is the same vector as $\mathbf{z}$ and otherwise we flip the sign of $\mathbf{z}_t$. Note that $(\mathbf{z}^*)^T \mathbf{z}^* = \mathbf{z}^T\mathbf{z}$. Now
\begin{align*}
S(G^*) - S(G) &\geq \left( \frac{\mathbf{x}^T A^*\mathbf{x}}{\mathbf{x}^T\mathbf{x}}- \frac{(\mathbf{z^*})^T A^*\mathbf{z^*}}{\mathbf{z}^T\mathbf{z}}\right) - \left(\frac{\mathbf{x}^T A\mathbf{x}}{\mathbf{x}^T\mathbf{x}}- \frac{\mathbf{z}^T A\mathbf{z}}{\mathbf{z}^T\mathbf{z}} \right)\\
&=\frac{2\mathbf{x}_t}{\mathbf{x}^T \mathbf{x}}\left(1-\sum\limits_{v\sim t}x_v\right)+\frac{2\mathbf{z}_t}{\mathbf{z}^T \mathbf{z}}\left(\mathrm{sgn}(\mathbf{z}_t)+\sum\limits_{v\sim t}\mathbf{z}_v\right) \\ 
&\geq \frac{2\mathbf{x}_t}{\mathbf{x}^T \mathbf{x}}\left(1-\sum\limits_{v\sim t}x_v\right)+\frac{2|\mathbf{z}_t|}{\mathbf{z}^T \mathbf{z}}\left(1-\left|\sum\limits_{v\sim t}\mathbf{z}_v\right|\right),
\end{align*}
where $\mathrm{sgn}(\mathbf{z}_t)$ equals $1$ if $\mathbf{z}_t > 0$ and $-1$ otherwise.
\[
\left|\sum\limits_{v\sim t}\mathbf{z}_v \right| \leq \sum_{v\sim t}|\mathbf{z}_v| \leq \sum_{\substack{v\sim t\\v\not\in B}}|\mathbf{z}_v| + \sum_{v\in B} |\mathbf{z}_v|.
\]
There are at most $2$ terms in the first sum, and so by Lemmas \ref{lmac3} and \ref{lsum}, we have 
\[
\left|\sum\limits_{v\sim t}\mathbf{z}_v \right| = \mathcal{O}\left(\frac{1}{\sqrt{n}}\right).
\]
Similarly, we have 
\[
\sum\limits_{v\sim t}\mathbf{x}_v  = \mathcal{O}\left(\frac{1}{\sqrt{n}}\right).
\]
This implies $1-\sum\limits_{v\sim t}\mathbf{x}_v > 0$ and  $1-\left|\sum\limits_{v\sim t}\mathbf{z}_v\right|>0$ for $n$ large enough, which implies that 
 \[
\frac{2\mathbf{x}_t}{\mathbf{x}^T \mathbf{x}}\left(1-\sum\limits_{v\sim t}x_v\right)+\frac{2|\mathbf{z}_t|}{\mathbf{z}^T \mathbf{z}}\left(1-\left|\sum\limits_{v\sim t}\mathbf{z}_v\right|\right) > 0
 \]
 for $n$ large enough. Hence $S(G^*) > S(G)$, contradicting the assumption that $G$ is spread-extremal.
\end{proof}

We have finally achieved our goal for this section. Theorem \ref{vertex degree n-1} follows immediately from the definition of $B$ and the fact that it is empty, as implied by Lemma \ref{B empty}.

\section{Determining Graph Structure}
By Theorem \ref{vertex degree n-1}, the vertex $w$ has degree $n-1$, or equivalently, $K_{1,n-1}$ is a subgraph of $G$. Since $G$ is $K_{2,3}$-free, the graph induced by the neighborhood of $w$ has maximum degree at most $2$. Furthermore, this subgraph cannot contain a cycle, otherwise $G$ would contain a wheel-graph and this is a $K_4$-minor. Any graph of maximum degree at most $2$ that does not contain a cycle is a disjoint union of paths. Therefore, we know that $G$ is given by a $K_1 \vee F$ where $F$ is a disjoint union of paths. Our next task is to study $F$. To this end, we denote the number of edges in $F$ by $m$. Our main theorem, Theorem \ref{thm: main} is proved if we can show that $m = \Omega(n)$. Before doing this, we need a more accurate estimate for the eigenvector entries. 

\begin{lemma}\label{z_u} For any $u\not=w$, we have $\mathbf{z}_u= \frac{1}{\lambda_n}+ \frac{d_u-1}{\lambda_n^2}+\Theta\left(\frac{1}{n^{3/2}}\right)$.
\end{lemma}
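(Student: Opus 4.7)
The plan is to iterate the eigenvalue equation $\lambda_n \mathbf{z}_v = \sum_{v'\sim v}\mathbf{z}_{v'}$ twice, starting at $u$. The structural inputs I need are all available at this point: (a) $w\sim u$ for every $u\neq w$, since $d_w = n-1$ by Theorem~\ref{vertex degree n-1}; (b) $\mathbf{z}_w = 1$ by the chosen normalization; (c) every vertex $u\neq w$ has at most two neighbors other than $w$, because the opening paragraph of Section~4 identifies $G$ as $K_1\vee F$ with $F$ a linear forest, so $d_u - 1 \leq 2$ is bounded; and (d) $|\lambda_n| = \Theta(\sqrt{n})$ by Lemma~\ref{lambda_n crude bound}, together with $|\mathbf{z}_v| = \mathcal{O}(1/\sqrt{n})$ for $v\neq w$ by Lemma~\ref{lmac3}.

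First, apply the eigenvalue equation at $u$ and split off the $w$-term, using $\mathbf{z}_w=1$, to obtain
\[
\lambda_n \mathbf{z}_u \;=\; 1 + \sum_{\substack{v\sim u\\ v\neq w}} \mathbf{z}_v,
\]
where by (c) the sum has at most two terms. To convert this into the claimed two-term expansion, I need a first-order estimate for $\mathbf{z}_v$ at each such neighbor $v$. Applying the eigenvalue equation a second time, now at $v$, the same reasoning gives $\lambda_n \mathbf{z}_v = 1 + \sum_{v'\sim v,\,v'\neq w}\mathbf{z}_{v'}$ with at most two terms on the right, each of size $\mathcal{O}(1/\sqrt{n})$ by (d). Hence $\lambda_n \mathbf{z}_v = 1 + \mathcal{O}(1/\sqrt{n})$, and dividing by $\lambda_n = -\Theta(\sqrt{n})$ yields $\mathbf{z}_v = 1/\lambda_n + \mathcal{O}(1/n)$. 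Substituting back and using the bounded number of terms to absorb the prefactor into the error gives
\[
\lambda_n \mathbf{z}_u \;=\; 1 + \frac{d_u - 1}{\lambda_n} + \mathcal{O}\!\left(\tfrac{1}{n}\right),
\]
and dividing through by $\lambda_n$ once more produces the claimed expansion with error $\mathcal{O}(1/(n|\lambda_n|)) = \mathcal{O}(1/n^{3/2})$.

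There is no real obstacle; the argument is straightforward bookkeeping with the eigenvalue equation, made clean by the fact that the structural work of the previous section has pinned down $G$ well enough that every vertex other than $w$ has degree at most $3$. The one cosmetic point worth noting is that the error term in the statement is written as $\Theta(1/n^{3/2})$, but in light of the paper's declared convention of mixing $\mathcal{O}$ and $\Theta$ notation loosely this should be read as an $\mathcal{O}(1/n^{3/2})$ upper bound — no matching lower bound is needed for later applications, and indeed the two-hop contributions could in principle partially cancel.
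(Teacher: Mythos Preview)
Your proof is correct and follows essentially the same two-step bootstrap as the paper: apply the eigenvalue equation at $u$ to isolate the $\mathbf{z}_w=1$ term, use Lemma~\ref{lmac3} on the at most two remaining neighbors to get $\mathbf{z}_v = 1/\lambda_n + \mathcal{O}(1/n)$, then substitute back and divide by $\lambda_n$. Your remark about reading the $\Theta(1/n^{3/2})$ as an $\mathcal{O}$-bound is also apt and consistent with the paper's stated notational conventions.
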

\begin{proof}
As we are only considering outerplanar graphs, we have $d_u\in \{1,2,3\}$ for all $u\neq w$. Hence
\begin{align*}
\lambda_n \mathbf{z}_u&=\sum\limits_{y \sim u}\mathbf{z}_y\\
&=\mathbf{z}_w+\sum_{\substack{y\sim u\\ y\not=w}} \mathbf{z}_y \\
&=1-\Theta\left(\frac{1}{\sqrt{n}}\right) 
\end{align*}
by Lemma \ref{lmac3} and our normalization. 

Note that as $\lambda_n\mathbf{z}_u=1+\mathbf{z}_{u_1}+\mathbf{z}_{u_2}>0$ and $\lambda_n<0$, we must have $\mathbf{z}_u<0$. 
Next we repeat the argument to improve our estimate,
\begin{align*}
    \lambda_n \mathbf{z}_u&=\mathbf{z}_w+\sum_{\substack{y\sim u\\ y\not=w}} \mathbf{z}_y\\
    &=1+(d_u-1)\left(\frac{1}{\lambda_n}+\Theta\left(\frac{1}{n}\right)\right).
    \shortintertext{Now we apply our bounds on $\lambda_n$ in Lemma \ref{lambda_n crude bound} to get}
    \mathbf{z}_u&= \frac{1}{\lambda_n}+ \frac{d_u-1}{\lambda_n^2}+\Theta\left(\frac{1}{n^{3/2}}\right).
\end{align*}
\end{proof}
We can use equivalent reasoning to obtain a very similar approximation for the $\mathbf{x}_u$ entries,
\begin{lemma}\label{x_u}For any $u\not=w$, we have
 $\mathbf{x}_u= \frac{1}{\lambda_1}+\frac{d_u-1}{\lambda_1^2}+\Theta(\frac{1}{n^{3/2}})$ 
\end{lemma}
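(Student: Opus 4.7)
The plan is to mirror, essentially verbatim, the bootstrap argument used for $\mathbf{z}_u$ in Lemma \ref{z_u}, substituting the Perron eigenpair $(\lambda_1, \mathbf{x})$ for $(\lambda_n, \mathbf{z})$. By Theorem \ref{vertex degree n-1}, $w$ is adjacent to every other vertex, so for any $u\neq w$ the eigen-equation reads
\[
\lambda_1 \mathbf{x}_u \;=\; \mathbf{x}_w + \sum_{\substack{y\sim u\\ y\neq w}} \mathbf{x}_y \;=\; 1 + \sum_{\substack{y\sim u\\ y\neq w}} \mathbf{x}_y,
\]
where I have used the normalization $\mathbf{x}_w=1$. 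Since $G$ is $K_{2,3}$-free, $N(w)$ induces a subgraph of maximum degree at most $2$, so $d_u\in\{1,2,3\}$ and the remaining sum contains at most two terms.

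First I would obtain the crude estimate. Each $\mathbf{x}_y$ with $y\neq w$ has size $\mathcal{O}(1/\sqrt{n})$ by Lemma \ref{lmac3}, so
\[
\lambda_1 \mathbf{x}_u \;=\; 1 + \mathcal{O}\!\left(\tfrac{1}{\sqrt{n}}\right).
\]
Combined with $\lambda_1=\Theta(\sqrt{n})$ (Lemma \ref{lambda_1 bounds} and Corollary \ref{lambda_1 lower bound}), this yields the preliminary bound
\[
\mathbf{x}_u \;=\; \frac{1}{\lambda_1} + \Theta\!\left(\tfrac{1}{n}\right),
\]
valid for every vertex $u\neq w$ (with the error constants independent of $u$).

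Next I would iterate. Substituting the preliminary estimate back into the eigen-equation gives
\[
\sum_{\substack{y\sim u\\ y\neq w}}\mathbf{x}_y \;=\; (d_u-1)\left(\frac{1}{\lambda_1}+\Theta\!\left(\tfrac{1}{n}\right)\right),
\]
so that
\[
\lambda_1\mathbf{x}_u \;=\; 1 + \frac{d_u-1}{\lambda_1} + \Theta\!\left(\tfrac{1}{n}\right).
\]
Dividing through by $\lambda_1$ and using $1/\lambda_1=\Theta(1/\sqrt{n})$ converts the $\Theta(1/n)$ error term into $\Theta(1/n^{3/2})$, yielding the claimed expansion $\mathbf{x}_u = \tfrac{1}{\lambda_1}+\tfrac{d_u-1}{\lambda_1^2}+\Theta(\tfrac{1}{n^{3/2}})$.

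There is really no serious obstacle: the only things to check are that $u$ is indeed adjacent to $w$ (guaranteed by Theorem \ref{vertex degree n-1}), that $d_u-1\le 2$ so only finitely many summands are being approximated (guaranteed by $K_{2,3}$-freeness), and that the $\Theta$ bounds on $\lambda_1$ allow us to invert cleanly. Everything else is the same formal two-step bootstrap as in Lemma \ref{z_u}, which is why the authors indicate they omit the details.
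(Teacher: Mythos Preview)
Your proposal is correct and follows essentially the same two-step bootstrap as the paper's own proof: use the eigen-equation at $u$, invoke Lemma~\ref{lmac3} to get $\mathbf{x}_u=\tfrac{1}{\lambda_1}+\Theta(\tfrac{1}{n})$, and then substitute back to refine the error to $\Theta(\tfrac{1}{n^{3/2}})$. The only minor remark is that the paper does in fact include the proof rather than omitting it, but the content is identical to what you wrote.
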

\begin{proof}
By the same logic as in the proof of Lemma \ref{z_u}, we have 
\begin{align*}\lambda_1 \mathbf{x}_u&=\sum\limits_{w \sim u}\mathbf{x}_w\\
&=\mathbf{x}_w+\sum_{\substack{y\sim u\\ y\not=w}} \mathbf{x}_y\\
&=1+\Theta\left(\frac{1}{\sqrt{n}}\right) \text{by Corollary \ref{lmac3}}.
\end{align*}
So $\mathbf{x}_u=\frac{1}{\lambda_1}+\Theta\left(\frac{1}{n}\right)$. Next we repeat the argument to improve our estimate,
\begin{align*}
    \lambda_1 \mathbf{x}_u&=\mathbf{x}_w+\sum_{\substack{y\sim u\\ y\not=w}} \mathbf{x}_y\\
    &=1+(d_u-1)\left(\frac{1}{\lambda_1}+\Theta \left(\frac{1}{n}\right)\right) .
\end{align*}
So $\mathbf{x}_u= \frac{1}{\lambda_1}+\frac{d_u-1}{\lambda_1^2}+\Theta\left(\frac{1}{n^{3/2}}\right)$ according to our bounds on $\lambda_1$ in Lemma \ref{lambda_1 bounds}.
\end{proof}
Using Lemma \ref{z_u}, we can now get tighter estimates on $\lambda_n$.

\begin{lemma}\label{refined lambda_n bound}
We have that $\lambda_n= -\sqrt{n-1}+\frac{m}{n-1} +  \Theta \left(\frac{m}{n^{3/2}}\right)$.
\end{lemma}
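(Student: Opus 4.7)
My plan is to apply the eigenvalue equation twice at the vertex $w$, exploiting the fact that $w$ dominates $G$. By Theorem \ref{vertex degree n-1}, $d_w = n-1$, so for every $v \neq w$ the common neighbors of $w$ and $v$ are exactly the elements of $N(v)\setminus\{w\}$; hence $(A^2)_{ww} = n-1$ and $(A^2)_{wv} = d_v - 1$. Using $\mathbf{z}_w = 1$, the identity $\lambda_n^2 \mathbf{z}_w = (A^2\mathbf{z})_w$ rearranges to
\[
\lambda_n^2 = (n-1) + \sum_{v \neq w}(d_v - 1)\,\mathbf{z}_v.
\]

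Next I would substitute Lemma \ref{z_u} into this sum. Since $F$ is a linear forest with $m$ edges, the handshake lemma gives $\sum_{v \neq w}(d_v - 1) = 2m$, and because each $d_v - 1 \in \{0,1,2\}$ I also have $\sum_{v \neq w}(d_v - 1)^2 \leq 2\sum_{v \neq w}(d_v - 1) = 4m$. Plugging $\mathbf{z}_v = \frac{1}{\lambda_n} + \frac{d_v-1}{\lambda_n^2} + \Theta(n^{-3/2})$ into the sum and using these identities collapses it to $\frac{2m}{\lambda_n} + O(m/n)$, giving
\[
\lambda_n^2 = (n-1) + \frac{2m}{\lambda_n} + O\left(\frac{m}{n}\right).
\]

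To finish I would invoke Lemma \ref{lambda_n crude bound} to replace $1/\lambda_n$ by $-1/\sqrt{n-1} + O(1/n)$, yielding $\lambda_n^2 = (n-1) - 2m/\sqrt{n-1} + O(m/n)$. Taking the negative square root (valid since $\lambda_n < 0$) and Taylor expanding $\sqrt{1 - y}$ with $y := 2m/(n-1)^{3/2} + O(m/n^2) = O(1/\sqrt{n})$ (using $m \leq n - 2$) gives
\[
|\lambda_n| = \sqrt{n-1}\left(1 - \tfrac{y}{2} + O(y^2)\right) = \sqrt{n-1} - \frac{m}{n-1} + O\left(\frac{m}{n^{3/2}}\right),
\]
after absorbing $O(m^2/n^{5/2})$ into $O(m/n^{3/2})$ using $m/n \leq 1$. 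Negating produces the claimed expression.

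The main obstacle is purely bookkeeping: the target precision $m/n^{3/2}$ is easily destroyed by a careless estimate. Specifically one must be careful that the $(d_v-1)^2$ term contributes only $O(m/n)$ (rather than $O(1)$), and that the per-vertex error $\Theta(n^{-3/2})$ from Lemma \ref{z_u}, weighted by $(d_v-1)$ and summed, stays at $O(m/n^{3/2})$ rather than ballooning to $O(n^{-1/2})$ as it would under naive summation over all $n-1$ vertices. The use of $A^2$ is crucial here: applying $A$ just once to $\lambda_n \mathbf{z}_w = \sum_{v \neq w}\mathbf{z}_v$ loses a factor of $\sqrt{n}$ in the error and only yields $\lambda_n = -\sqrt{n-1} + m/(n-1) + O(1/\sqrt{n})$, which is too weak when $m = o(n)$.
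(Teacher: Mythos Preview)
Your argument is correct and takes a genuinely different route from the paper. The paper proves the two inequalities separately: the upper bound $\lambda_n \le -\sqrt{n-1}+\frac{m}{n-1}$ comes from the Rayleigh quotient evaluated at the explicit test vector $(1,-\frac{1}{\sqrt{n-1}},\dots,-\frac{1}{\sqrt{n-1}})^T$, while the lower bound comes from writing $\lambda_n=\frac{\mathbf z^TA\mathbf z}{\mathbf z^T\mathbf z}$, splitting off the star $K_{1,n-1}$ (whose Rayleigh quotient is $\ge -\sqrt{n-1}$), and estimating the remaining forest contribution via Lemma~\ref{z_u}. You instead derive the single exact identity $\lambda_n^2=(n-1)+\sum_{v\ne w}(d_v-1)\mathbf z_v$ from $(A^2\mathbf z)_w=\lambda_n^2$, substitute Lemma~\ref{z_u}, and Taylor-expand the square root. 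Your route is tighter in the sense that it handles both directions at once, and the key step---that the per-vertex errors get weighted by $d_v-1$ (summing to $2m$) rather than by $1$ (summing to $n-1$)---is exactly what forces the $O(m/n^{3/2})$ precision, as you note. The paper's approach, on the other hand, yields the clean error-free upper bound $\lambda_n\le -\sqrt{n-1}+\frac{m}{n-1}$ as a byproduct, which pins down the sign of the correction; your method gives only a two-sided $O(m/n^{3/2})$ window around $-\sqrt{n-1}+\frac{m}{n-1}$, though that is all the lemma (and its later use) actually requires.
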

\begin{proof}
We first define the vector \[\mathbf{y_2}=\begin{bmatrix}1\\ -\frac{1}{\sqrt{n-1}}\\ -\frac{1}{\sqrt{n-1}}\\ \vdots\\ -\frac{1}{\sqrt{n-1}}\end{bmatrix}\] where $\mathbf{y_2}$ has $n$ entries and $w$ corresponds to the 1 entry. As $\lambda_n$ is the minimum Rayleigh quotient  we have 
\begin{align*}
    \lambda_n &\leq \frac{\mathbf{y_2}^TA\mathbf{y_2}}{\mathbf{y_2}^T\mathbf{y_2}}\\
    &=\frac{2\sum\limits_{i \sim j} (\mathbf{y}_{2})_i(\mathbf{y}_{2})_j}{2}\\
    &=(n-1)\left(-\frac{1}{\sqrt{n-1}}\right)+m\left(\frac{1}{n-1}\right)\\
    &=-\sqrt{n-1}+\frac{m}{n-1}.
\end{align*}
In order to show the lower bound on $\lambda_n$, we need to realize that $\lambda_n$ is the minimum possible Rayleigh quotient. So 
 \[\lambda_n= \frac{\mathbf{z}^TA\mathbf{z}}{\mathbf{z}^T\mathbf{z}}
    =\frac{2\sum\limits_{i\sim j}\mathbf{z}_i\mathbf{z}_j}{\mathbf{z}^T\mathbf{z}}
    = \frac{2\sum\limits_{w\sim k}\mathbf{z}_w\mathbf{z}_k}{\mathbf{z}^T\mathbf{z}}+\frac{2\sum\limits_{{i\sim j \, ,\, i,j\neq w}}\mathbf{z}_i\mathbf{z}_j}{\mathbf{z}^T\mathbf{z}}.\]
    
    Note the first term is the Rayleigh quotient for the star subgraph centered at the vertex $w$ of maximum degree. Hence it is bounded from below by $-\sqrt{n-1}$. More specifically, we have
    \[
    \frac{2\sum\limits_{w\sim k}\mathbf{z}_w\mathbf{z}_k}{\mathbf{z}^T\mathbf{z}} = \frac{\mathbf{z}^T A(K_{1,n-1})\mathbf{z}}{\mathbf{z}^T\mathbf{z}} \geq \lambda_n(A(K_{1,n-1})) = -\sqrt{n-1}.
    \]

    Applying Lemma \ref{z_u} we have
    
    \begin{align*}
   \lambda_n &\geq -\sqrt{n-1}+\frac{2\sum\limits_{\substack{i\sim j\\ i,j\not=w}}\left(\frac{1}{\lambda_n}+ \frac{d_i-1}{\lambda_n^2}+\Theta\left(\frac{1}{n^{3/2}}\right)\right)\left(\frac{1}{\lambda_n}+ \frac{d_j-1}{\lambda_n^2}+\Theta\left(\frac{1}{n^{3/2}}\right)\right)}{1+\left(\sum_{\ell\not= w} \frac{1}{\lambda_n} + \frac{d_\ell-1}{\lambda_n^2}+ \Theta\left(\frac{1}{n^{3/2}}\right)\right)^2}
   \end{align*}
    For $\ell \not=w$ we have $0\leq d_\ell-1 \leq 2$. Note that for $n$ large enough we have that $\frac{1}{\lambda_n} + \frac{d_u-1}{\lambda_n^2} + \Theta\left(\frac{1}{n^{3/2}}\right) < 0$, and so we have a lower bound if we replace all $d_\ell-1$ terms in the numerator by $2$ and in the denominator by $0$, and so we have 
    \begin{align*}
   \lambda_n &\geq -\sqrt{n-1}+\frac{2\sum\limits_{\substack{i\sim j\\ i,j\not=w}}\left(\frac{1}{\lambda_n}+ \frac{2}{\lambda_n^2}+\Theta\left(\frac{1}{n^{3/2}}\right)\right)\left(\frac{1}{\lambda_n}+ \frac{2}{\lambda_n^2}+\Theta\left(\frac{1}{n^{3/2}}\right)\right)}{1+\sum\limits_{\ell\not= w} \left(\frac{1}{\lambda_n} + \Theta\left(\frac{1}{n^{3/2}}\right)\right)^2}
   \\&=
   -\sqrt{n-1} + \frac{2m\left(\frac{1}{\lambda_n^2} + \Theta\left(\frac{1}{n^{3/2}}\right)\right)}{1+(n-1)\left(\frac{1}{\lambda_n^2} + \Theta\left(\frac{1}{n^{3/2}}\right)\right)}.
   \end{align*}
   Since $-\sqrt{n-1}-2\leq \lambda_n \leq \sqrt{n-1}+2$, we have that $\frac{1}{\lambda_n^2} = \frac{1}{n-1} + \Theta\left(\frac{1}{n^{3/2}}\right)$. Hence we have
   
   \begin{align*}
     \lambda_n &\geq -\sqrt{n-1} + \frac{2m\left(\frac{1}{n-1} + \Theta\left(\frac{1}{n^{3/2}}\right)\right)}{1+(n-1)\left(\frac{1}{n-1} +\Theta\left(\frac{1}{n^{3/2}}\right)\right)}\\
     &= -\sqrt{n-1}+ \frac{\frac{2m}{n-1} + \Theta\left(\frac{m}{n^{3/2}}\right)}{2 + \Theta\left(\frac{1}{n^{1/2}}\right)} \\
     &= -\sqrt{n-1}+ \left(\frac{m}{n-1} + \Theta\left(\frac{m}{n^{3/2}}\right)\right)\left(1 + \Theta\left(\frac{1}{\sqrt{n}}\right)\right),
   \end{align*}
   completing the proof.
 
\end{proof}

We claim a very similar result for $\lambda_1$.

\begin{lemma}\label{refined lambda_1 bound}
$\lambda_1=\sqrt{n-1}+\frac{m}{n-1}+\Theta\left(\frac{m}{n^{3/2}}\right)$.
\end{lemma}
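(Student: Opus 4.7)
The strategy is to prove matching upper and lower bounds, mirroring the argument used for Lemma \ref{refined lambda_n bound}. For the lower bound on $\lambda_1$, I would evaluate the Rayleigh quotient \eqref{Rayleigh max} against the test vector
\[
\mathbf{y_1} = \begin{bmatrix} 1 & \frac{1}{\sqrt{n-1}} & \cdots & \frac{1}{\sqrt{n-1}} \end{bmatrix}^T,
\]
where the $1$ entry corresponds to $w$. The computation is essentially identical to the one for $\mathbf{y_2}$ in Lemma \ref{refined lambda_n bound}, except that no signs cancel now: the $n-1$ star edges contribute $2\sqrt{n-1}$ to $\mathbf{y_1}^T A\mathbf{y_1}$, the $m$ edges of $F$ contribute $\frac{2m}{n-1}$, and $\mathbf{y_1}^T\mathbf{y_1}=2$, so $\lambda_1 \geq \sqrt{n-1} + \frac{m}{n-1}$.

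For the matching upper bound, I would use $\lambda_1 = \mathbf{x}^T A \mathbf{x}/(\mathbf{x}^T\mathbf{x})$ and split the quadratic form into edges incident to $w$ and edges of $F$:
\[
\lambda_1 = \frac{2\sum_{k \sim w} \mathbf{x}_w \mathbf{x}_k}{\mathbf{x}^T \mathbf{x}} + \frac{2\sum_{\substack{i \sim j \\ i,j \neq w}} \mathbf{x}_i \mathbf{x}_j}{\mathbf{x}^T \mathbf{x}}.
\]
The first term is the Rayleigh quotient of $\mathbf{x}$ against $A(K_{1,n-1})$, hence bounded above by $\lambda_1(K_{1,n-1}) = \sqrt{n-1}$. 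For the second term I would substitute the estimate from Lemma \ref{x_u}. Since $\mathbf{x}$ is a positive Perron vector and both $\frac{1}{\lambda_1}$ and $\frac{d-1}{\lambda_1^2}$ are positive, replacing each $d_i - 1$ by its maximum value $2$ in the numerator produces an upper bound, while replacing each $d_\ell - 1$ by $0$ in the denominator produces a lower bound. These are the same numerical substitutions as in the proof of Lemma \ref{refined lambda_n bound}, but they now push the inequalities in the opposite direction because $\frac{1}{\lambda_1}$ and $\frac{d-1}{\lambda_1^2}$ share a sign, whereas $\frac{1}{\lambda_n}$ and $\frac{d-1}{\lambda_n^2}$ did not.

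From Lemma \ref{lambda_1 bounds} and Corollary \ref{lambda_1 lower bound} we have $\frac{1}{\lambda_1^2} = \frac{1}{n-1} + \Theta(n^{-3/2})$, and with this the second term simplifies to $\frac{m}{n-1} + \Theta(m/n^{3/2})$ by exactly the algebraic manipulations that close out the proof of Lemma \ref{refined lambda_n bound}. Combined with the $\sqrt{n-1}$ bound on the star contribution and the test-vector lower bound, this yields the required two-sided estimate. The only point that needs care is the sign bookkeeping in the previous paragraph: one must verify that each substitution into Lemma \ref{x_u} pushes the inequality in the intended direction. Once that is settled, the remaining work is a direct transcription of the steps that appeared in the proof of Lemma \ref{refined lambda_n bound}.
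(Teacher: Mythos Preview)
Your proposal is correct and follows essentially the same approach as the paper: the lower bound via the test vector with entry $1$ at $w$ and $\frac{1}{\sqrt{n-1}}$ elsewhere, and the upper bound by splitting the Rayleigh quotient into the star part (bounded by $\lambda_1(K_{1,n-1})=\sqrt{n-1}$) and the $F$-part, into which Lemma~\ref{x_u} is substituted with the same $d_i-1\mapsto 2$ and $d_\ell-1\mapsto 0$ replacements. Your remark about the sign bookkeeping is exactly the one nontrivial point, and you handle it correctly.
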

\begin{proof}
First we prove the lower bound. Define the vector \[\mathbf{y_3}=\begin{bmatrix}1\\ \frac{1}{\sqrt{n-1}}\\ \frac{1}{\sqrt{n-1}}\\ \vdots\\ \frac{1}{\sqrt{n-1}}\end{bmatrix},\]
where $w$ corresponds to the entry 1. As $\lambda_1$ is the maximum possible Rayleigh quotient we have 
\begin{align*}
    \lambda_1 &\geq \frac{\mathbf{y}^TA\mathbf{y_3}}{\mathbf{y_3}^T\mathbf{y_3}}\\
    &=\frac{2\sum\limits_{i \sim j} (\mathbf{y}_3)_i(\mathbf{y}_3)_j}{2}\\
    &=(n-1)\left(\frac{1}{\sqrt{n-1}}\right)+m\left(\frac{1}{n-1}\right)\\
    &=\sqrt{n-1}+\frac{m}{n-1}.
\end{align*}
For the upper bound, we use the fact that $\lambda_1$ is the maximum possible Rayleigh quotient similarly to the previous lemma. So
\begin{align*}
    \lambda_1&= \frac{\mathbf{x}^TA\mathbf{x}}{\mathbf{x}^T\mathbf{x}}\\
    &=\frac{2\sum\limits_{i\sim j}\mathbf{x}_i\mathbf{x}_j}{\mathbf{x}^T\mathbf{x}}\\
    &= \frac{2\sum\limits_{w\sim k}\mathbf{x}_w\mathbf{x}_k}{\mathbf{x}^T\mathbf{x}}+\frac{2\sum{\substack{i\sim j \\ i,j\neq w}}\mathbf{x}_i\mathbf{x}_j}{\mathbf{x}^T\mathbf{x}}.
    \end{align*}
    As before, the first term is 
    \[
    \frac{\mathbf{x}^T A(K_{1,n-1})\mathbf{x}}{\mathbf{x}^T\mathbf{x}} \leq \lambda_1(A(K_{1,n-1})) = \sqrt{n-1}.
    \]
    
     Applying Lemma \ref{x_u} and using $0\leq d_u-1 \leq 2$ for all $u\not=w$ we have
    
    \begin{align*}
   \lambda_1 &\leq \sqrt{n-1}+\frac{2\sum_{\substack{i\sim j\\ i,j\not=w}}\left(\frac{1}{\lambda_1}+ \frac{d_i-1}{\lambda_1^2}+\Theta\left(\frac{1}{n^{3/2}}\right)\right)\left(\frac{1}{\lambda_1}+ \frac{d_j-1}{\lambda_1^2}+\Theta\left(\frac{1}{n^{3/2}}\right)\right)}{1+\left(\sum_{\ell\not= w} \frac{1}{\lambda_1} + \frac{d_\ell-1}{\lambda_1^2}+ \Theta\left(\frac{1}{n^{3/2}}\right)\right)^2}\\
   &\leq \sqrt{n-1}+\frac{2\sum_{\substack{i\sim j\\ i,j\not=w}}\left(\frac{1}{\lambda_1}+ \frac{2}{\lambda_1^2}+\Theta\left(\frac{1}{n^{3/2}}\right)\right)\left(\frac{1}{\lambda_1}+ \frac{2}{\lambda_1^2}+\Theta\left(\frac{1}{n^{3/2}}\right)\right)}{1+\sum_{\ell\not= w} \left(\frac{1}{\lambda_1} + \Theta\left(\frac{1}{n^{3/2}}\right)\right)^2}
   \end{align*}
    
   Using $\sqrt{n-1}-2\leq \lambda_1 \leq \sqrt{n-1}+2$, we have
   
   \begin{align*}
     \lambda_1 &\leq \sqrt{n-1} + \frac{2m\left(\frac{1}{n-1} + \Theta\left(\frac{1}{n^{3/2}}\right)\right)}{1+(n-1)\left(\frac{1}{n-1} +\Theta\left(\frac{1}{n^{3/2}}\right)\right)}\\
     &= \sqrt{n-1}+ \frac{\frac{2m}{n-1} + \Theta\left(\frac{m}{n^{3/2}}\right)}{2 + \Theta\left(\frac{1}{n^{1/2}}\right)} \\
     &= \sqrt{n-1}+ \left(\frac{m}{n-1} + \Theta\left(\frac{m}{n^{3/2}}\right)\right)\left(1 + \Theta\left(\frac{1}{\sqrt{n}}\right)\right),
   \end{align*}
   completing the proof.

\end{proof}

We are now in a position to prove our main theorem.

\begin{proof}[Proof of Theorem \ref{thm: main}]
As before, let $G_1$ be the graph $K_1\vee P_{n-1}$ and let $A_1$ be its adjacency matrix with the dominating vertex corresponding to the first row and column. Since $G$ is spread-extremal we must have $S(G) \geq S(G_1)$. We lower bound $S(G_1)$ using the vectors
\[\mathbf{v}_1=\begin{bmatrix}-1 + \sqrt{n}\\ 1\\ 1\\ \vdots\\ 1\end{bmatrix} \quad \mbox{and} \quad \mathbf{v}_2=\begin{bmatrix}-1-\sqrt{n}\\ 1\\ 1\\ \vdots\\ 1\end{bmatrix}.\]
Using these vectors and the Rayleigh principle, we have that 
\begin{align*}
S(G_1) &\geq \frac{ \mathbf{v}_1^T A_1 \mathbf{v}_1}{\mathbf{v}_1^T\mathbf{v}_1} - \frac{ \mathbf{v}_2^T A_1 \mathbf{v}_2}{\mathbf{v}_2^T\mathbf{v}_2} \\
& = \frac{2(n-1)(\sqrt{n}-1) + (n-2)(1)}{2n-2\sqrt{n}} - \frac{2(n-1)(-\sqrt{n}-1) + (n-2)(1)}{2n+2\sqrt{n}}\\
& = \frac{2(n-1)(\sqrt{n}-1) + (n-1)(1)}{2n-2\sqrt{n}} -\frac{1}{2n-2\sqrt{n}}  \\& - \frac{2(n-1)(-\sqrt{n}-1) + (n-1)(1)}{2n+2\sqrt{n}}+ \frac{1}{2n+2\sqrt{n}}\\
&= \frac{n-1}{\sqrt{n}-1} + \frac{n-1}{\sqrt{n}+1} - \frac{\sqrt{n}}{n^2-n} \geq 2\sqrt{n} - \frac{1}{n}.
\end{align*}
Combining this with Lemmas \ref{refined lambda_n bound} and \ref{refined lambda_1 bound}, we have 
\[
2\sqrt{n} - \frac{1}{n} \leq \lambda_1(G) - \lambda_n(G) = \left(\sqrt{n-1}+\frac{m}{n-1}+\Theta\left(\frac{m}{n^{3/2}}\right)\right) - \left( -\sqrt{n-1}+\frac{m}{n-1}+\Theta\left(\frac{m}{n^{3/2}}\right)\right).
\]
Therefore there exists a constant $C$ such that for $n$ large enough we have 
\[
2\sqrt{n} - \frac{1}{n} \leq 2\sqrt{n-1} + \frac{C\cdot m}{n^{3/2}}.
\]
Since $\sqrt{n}- \sqrt{n-1} = \Omega(n^{-1/2})$, rearranging gives the final result.

\end{proof}

\section*{Acknowledgements}
The authors would like to thank the Community of Mathematicians and Statisticians Exploring Research (Co-MaStER) program at Villanova, through which this research was started.
\bibliographystyle{plain}
\bibliography{bib}

\end{document}